\newcommand{\C}{\mathbb{C}}
\newcommand{\R}{\mathbb{R}}
\newcommand{\calA}{\mathcal{A}}
\newcommand{\calC}{\mathcal{C}}
\newcommand{\calH}{\mathcal{H}}
\newcommand{\calM}{\mathcal{M}}
\newcommand{\calP}{\mathcal{P}}
\newcommand{\calS}{\mathcal{S}}
\newcommand{\calX}{\mathcal{X}}
\newcommand{\calY}{\mathcal{Y}}
\newcommand{\calZ}{\mathcal{Z}}
\newcommand{\scrS}{\mathscr{S}}
\newcommand{\abs}[1]{\vert #1 \vert}
\newcommand{\norm}[1]{\Vert #1 \Vert}
\newcommand{\abss}[1]{\left\vert #1 \right\vert}
\newcommand{\normm}[1]{\left\Vert #1 \right\Vert}
\newcommand{\set}[1]{\left\lbrace #1\right\rbrace}
\newcommand{\sse}{\subseteq}
\newcommand{\sprod}[1]{\left\langle #1 \right\rangle}
\newcommand{\sph}{\mathbb{S}}
\newcommand{\prob}[1]{\mathbb{P}\left( \text{#1}\right)}
\newcommand{\prb}[1]{\mathbb{P}\left( #1 \right)}
\newcommand{\erw}[1]{\mathbb{E}\left( #1 \right)}
\newcommand{\geqsim}{\gtrsim}
\newcommand{\leqsim}{\lesssim}
\DeclareMathOperator{\supp}{supp}
\DeclareMathOperator{\id}{id}
\newcommand{\muminus}{\mu_{-}}
\newcommand{\muplus}{\mu_{+}}
\newtheorem{lem}{Lemma}
\newtheorem{theo}[lem]{Theorem}
\newtheorem{cor}[lem]{Corollary}
\newtheorem{rem}[lem]{Remark}
\numberwithin{lem}{section}
\title{Sparse Blind Deconvolution and Demixing Through $\ell_{1,2}$-Minimization.\footnote{This article has been accepted for publication in \emph{Adv. Comput. Math.} The final publication is available at Springer via \href{http://dx.doi.org/10.1007/s10444-017-9533-0}{{\tt DOI 10.1007/s10444-017-9533-0 }}}}
\author{Axel Flinth\thanks{Postal adress: Techische Universität Berlin, Sekretariat MA 5-4, Straße des 17 Juni 136, 10623 Berlin. \\ E-mail: {\tt flinth@math.tu-berlin.de}}}
\affil{Institut für Mathematik \\ Technische Universität Berlin}
\begin{document}

\maketitle

\begin{abstract}
This paper concerns solving the sparse deconvolution and demixing problem using $\ell_{1,2}$-minimization. We show that under a certain structured random model, robust and stable recovery is possible. The results extend results of Ling and Strohmer [\emph{Self Calibration and Biconvex Compressive Sensing}, Inverse Problems, 2015], and in particular theoretically explain certain experimental findings from that paper. Our results do not only apply to the deconvolution and demixing problem, but to recovery of column-sparse matrices in general. 

\underline{MSC2010:} 52A41, 90C25.
\end{abstract}

  \subsection*{Acknowledgement}
	The author acknowledges support from  Deutsche
Forschungsgemeinschaft (DFG) Grant KU 1446/18~-~1 and the Berlin Mathematical School. He thanks Felix Krahmer and Dominik St\"oger for pointing out weak spots in the first version of the article, as well as providing references making it possible to repair them -- and even enhance the results slightly in the process.
He also wishes to thank Gitta Kutyniok for fruitful discussions.

%% TO INCLUDE
\section{Introduction}
Assume that we observe a vector $v \in \C^q$\footnote{In a previous (preprint) version of this article, all (random) vectors appearing were assumed to be real. As was pointed out to the author, this is not very fortunate due to the use of the Fourier transform. Concretely, the meaning of the vectors $a_i$ being Gaussian distributed changes meaning into something more artificial than when they are kept complex. For this reason, a change to a complex setting was made.} and are told that it is a sum of $r$ convolutions of $r$ pairs of vectors $w_i, z_i$, i.e.
\begin{align*}
	v= \sum_{i \in [r]} w_i * z_i,
\end{align*} 
where $[r]$ is a short-hand notation for the set $\set{1, 2, \dots, r}$. This problem is known as the \emph{blind deconvolution and demixing problem} (we need to 'demix' each contribution $w_i * z_i$ from the sum $\sum_{i} w_i * z_i$, as well as  'deconvolve' the unknown filters $w_i$ to recover the vectors $z_i$). In general, it is of course impossible to reconstruct the pairs $(w_i, z_i)$ without any structural assumptions on them. In this work, we will assume that there exist (known) subspaces $W_i$ and $U_i$ of $\C^q$, $i \in [r]$ such that $w_i \in W_i$ and $z_i \in U_i$ for each $i \in [r]$. This could in a communication application correspond to filters $w_i$ and signals $z_i$ having certain bandwidth restrictions.

There is a standard way to transform the blind deconvolution problem  into a matrix recovery problem (Ahmed, Recht and Romberg 2014 \cite{ahmed2014blind}; Ling and Strohmer 2015 \cite{ling2015sparseDeconv}). For certain sparsity assumptions on the vectors $w_i$ and $z_i$, this results in a recovery problem of a \emph{column sparse matrix tuple} $\calZ$. Such problems will be the focus of this paper.  

Before discussing strategies for solving such problems, let us begin by describing the transformation procedure in detail. Taking the Fourier transform of the above equation, we arrive at
\begin{align*}
	\hat{v}= \sum_{i \in [r]} \hat{w}_i \odot \hat{z}_i,
\end{align*}
where $\odot$ denotes elementwise multiplication, i.e. $(x \odot y)_i = x_i y_i$. Let us introduces the bases $(B^i_\ell)_{\ell \in [k_i]}$ for  $\widehat{W}_i$, and $(A_\ell^i)_{\ell \in [n_i]}$ for $\widehat{U}_i$, $i \in [r]$ (note that we do not assume that all subspaces have the same dimension). Then there exists coefficients $(f^i_k)_{k\in [k_i]}$ and $(g^i_j)_{j \in [n_i]}$, $i \in [r]$, such that for all $\ell \in [q]$
\begin{align*}
 \hat{v}_\ell &= \sum_{i \in [r]} \left(\sum_{\kappa \in [k_i]} f^i_\kappa B^i_\kappa\right)_\ell \left(\sum_{j \in [n_i]} g^i_j A^i_j \right)_\ell \\
	&= \sum_{i \in [r]} \sum_{\kappa \in [k_i], j \in [n_i]}  B^i_\kappa (\ell) f^i_\kappa g^i_j A^i_j(\ell) 
\end{align*}
If we define $r$ matrices $Z_i \in \C^{k_i,n_i}$ through  $Z_i = f^i (\overline{g}^i)^*$, i.e. $Z_i(\kappa,j)=f^i_\kappa g^i_j$, we see that we can write the latter sum as
\begin{align*}
	\sum_{i \in [r]} \sum_{\kappa \in [k_i]}  B^i_\kappa (\ell) (Z_i A^i_{(\cdot)}(\ell))_\kappa = \sum_{i \in [r]} \sprod{b^i_\ell, Z_i a^i_\ell},
\end{align*}
where we defined new vectors $a_\ell^i \in \C^{n_i}$ and $b_\ell^i \in \C^{k_i}$ through $$a_\ell^i(j) := A^i_j(\ell), \quad b_\ell^i(\kappa) := \overline{B^i_\kappa(\ell)}.$$

Hence, we have rewritten our deconvolution problem to a problem of recovering a matrix tuple $\calZ= (Z_i) \in \bigoplus_{i \in [r]} \C^{k_i, n_i}$ from the measurements
\begin{align*}
	\hat{v} = \left( \sum_{i \in [r]} \sprod{b^i_\ell, Z_i a^i_\ell} \right)_{\ell \in [q]}.
\end{align*}
Let us denote the linear \emph{measurement map} $$\bigoplus_{i \in [r]} \C^{k_i,n_i} \to \C^q, \quad \calZ \to \left( \sum_{i \in [r]} \sprod{b^i_\ell, Z_i a^i_\ell} \right)_{\ell \in [q]}$$ by $\calA$. \newline

In this paper, we want to assume that the basis coefficients $g^i$ are sparse, a situation considered also in (Ling and Strohmer 2015 \cite{ling2015sparseDeconv}). We do not, however, pose any requirements on $f^i$. The sparsity of the $g^i$-coefficients has the consequence that the matrices $Z_i = f^i(g^i)^*$ are \emph{column-sparse}, since only the columns corresponding to indices in $\supp g^i$ are not equal to zero.  Since it is well-known that this structure is promoted by the $\ell_{1,2}$ - norm  (Eldar and Mishali 2009 \cite{Eldar2009BlockSparse}; Stojnic, Parvaresh and Hassibi 2009 \cite{stojnic2008reconstruction})
\begin{align*}
		\norm{M}_{1,2} = \sum_{i=1}^n \norm{M(i)}_2,
		\end{align*} 
		where $M(i)$ is the $i$:th column of $M$, this naturally calls for the following recovering procedure
	\begin{align}
		\min \norm{\calZ}_{1,2}:= \sum_{i \in [r]} \norm{Z_i}_{1,2} \text{ subject to } \calA(\calZ)=b. \tag{$\calP_{1,2}$}
	\end{align}
	Despite this approach arguably being canonical for recovering column-sparse matrices, there has not been any theoretical analysis of the program $\calP_{1,2}$ when the measurement map $\calA$ is as above. 
	
	The article (Ling and Strohmer 2015 \cite{ling2015sparseDeconv}) has provided (in the case that $r=1$) a discussion on \emph{$\ell_1$-minimization} for recovering $\calZ$
	\begin{align}
		\min \norm{Z}_{1} \text{ subject to } \calA(Z)=b, \tag{$\calP_{1}$}
	\end{align} 
	where the $\ell_1$-norm of a matrix is simply defined as the sum over the absolute values its entries. In particular, they recover the well-known asymptotic result that $m \geqsim sk\log(nk)$ measurements suffices for $\calP_1$ to be successful at recovering an $sk$-sparse matrix in $\C^{k,n}$. At the end of the paper, they perform numerical experiments which show that $\ell_{1,2}$-minimization actually performs better than $\ell_1$-minimization at recovering column-sparse matrices. They claim that they have theoretical guarantees also for the $\calP_{1,2}$-problem, but have as of today not yet published any.  \newline
	
	In this work, we will, to some extent, provide that missing theoretical explaination, by generalizing the results of (Ling and Strohmer 2015, \cite{ling2015sparseDeconv}). We even improve them a bit by additionally including an argument for stability of the problem, in the sense that approximately column-sparse matrices will be approximately recovered by $\calP_{1,2}$. Also, our results are a bit more robust to noise. In addition to this, we consider the general case of $r>1$, i.e., we include the demixing part of the problem. 
	
	Although the main route of many of the arguments are the same as in the mentioned paper, several adjustments has had to be made for the argument to work also for the $\ell_{1,2}$-case. Many proofs become more difficult from a technical point of view. Also, a dual certificate type of condition for stability and robustness of $\ell_1$-minimization from (Foucart and Rauhut  2013 \cite{MathIntroToCS}) has had to be generalized to $\ell_{1,2}$-minimization.
	
 The remainder of this introduction is devoted to discussing related work. The rest of the paper will then be organized as follows: in Section \ref{sec:MainResult}, we present the measurement model we use, our main result, as well as an outline of the proof of it.  The details of the proofs are postponed to Section \ref{sec:Proofs}.	
	
	\subsection{Related work}
	$\ell_{1,2}$-minimization has been theoretically analysed before, i.e. in the resources 	(Eldar and Mishali 2009 \cite{Eldar2009BlockSparse}; Stojnic, Parvaresh and Hassibi 2009 \cite{stojnic2008reconstruction}) mentioned above. In (Asi, Mantzel and Romberg 2009 \cite{Romberg2009channelProtect}), it is mentioned as a way of solving the blind deconvolution problem, explicitly with applications in communication in mind.

	$\ell_{1,2}$-minimization is by no means the only possible way to recover the matrix tuple $\calZ$. In fact, one can alternatively use the fact each matrix $Z_i = f^i (g^i)^*$ is \emph{low-rank} as a prior. This naturally calls for nuclear norm-minimization (Fazel, Recht and Parrilo 2010 \cite{recht2010guaranteed}). The nuclear norm of a matrix tuple is canonically defined as the sum of the nuclear norms $\norm{Z_i}_{*}$ of each matrix $Z_i$. The nuclear norm of a matrix $Z_i$ is thereby given by the sum of the singular values of $Z_i$. Hence,
	\begin{align*}
		 \norm{\calZ}_{*}=\sum_{i \in [r]} \norm{Z_i}_{*} = \sum_{i \in [r]} \sum_{j\in [n_i]} \sigma_j(Z_i). 
	\end{align*}
The paper (Ling and Strohmer 2015 \cite{strohmer2015demixing}) treats recovery of low-rank matrix tuples using nuclear norm-minimization: they are able to prove that under several technical conditions resembling the ones we will consider in this paper, $\frac{q}{\log(q)^3}  \geqsim r^2 \max(k,n) \log(r+1)$ is enough to secure that the nuclear norm minimization procedure recovers the correct matrix tuple. In (Jung, Krahmer and Stöger 2016 \cite{Stoeger:cosera16}), it was even shown, by different authors, that the quadratic dependence on $r$ can be removed (to be precise, their result requires $\frac{q}{log(q)^2} \geqsim r(k\log^2(k) + n)\log\left(r\sqrt{n\log(nq) + \log(q)}\right)$).

 However, as was pointed out in (Ling and Strohmer 2015 \cite{ling2015sparseDeconv}), numerical experiments show that both $\ell_{1}$- and $\ell_{1,2}$-minimization perform significantly better than nuclear norm minimization when it comes to recovering matrices with the structure described above.  This is not hard to argue heuristically: Assuming $r=1$, the above approach tries to recover a $\C^{k,n}$-matrix of rank $1$, which needs an order of $n+k$ measurements. On the other hand $\calP_{1,2}$ (or $\calP_1$) tries to recover an $s$-column sparse (or $sk$-sparse) $\C^{k,n}$ matrix , which only needs $sk\log\left( nk\right)$. For really small sparsities and moderate $k$, $sk$ can be smaller than $n+k$. Therefore, the authors of the mentioned paper concentrate their efforts, as will we, on analysing the $\ell_1$-minimization (and $\ell_{1,2}$-minimization, respectively). \newline

 As observant readers already may have pointed out, the ''true dimension'' of the problem of recovering a matrix $f g^*$ with $f \in \R^k$ and $g \in \R^n$ $s$-sparse is neither $s\cdot k$ nor $n+k$, but instead $s+k$ (a mathematically precise statement of this claim is provided in (Kech and Krahmer 2016 \cite{Kech2016Bilinear})). As of today, to the best knowledge of the author, there are no convex minimization procedures which succeed which such few measurements. In this context, ( Bresler, Lee and Wu 2013 \cite{lee2013near}) should be mentioned. In that paper, the authors describe a alternating minimization procedure which under some additional conditions on the vectors $f$ and $g$ succeed with high probability already when $q$ is of the order of $s+k$.

%Assumptions

\section{Main Result} \label{sec:MainResult}

In this section, the main result together with an outline of the proof will be presented. In order to do that, we first need to describe our measurement model as well as the assumptions we make. 

Let us begin by describing the properties of the basis vectors $b^i_\ell$ and $a_\ell^i$.  The $b^i_\ell$ are assumed to be fixed and known (this corresponds to the spaces $W_i$ to be fixed and known). Also, we assume that each tuple $(b^i_\ell)_{\ell \in [q]}$ forms a Parseval Frame of $\C^{k_i}$, i.e
\begin{align*}
	\sum_{\ell \in [q]} b_\ell^i (b_\ell^i)^* = \id.
\end{align*}
In order for our proof to work, we will have to assume that the frames are somewhat well-conditioned. Concretely, we will assume that there exists positive numbers $\muminus$ and $\muplus$ so that $\tfrac{q}{k_i}\norm{b_\ell^i}_2^2 \in [\muminus^2, \muplus^2]$ for all $i$ and $\ell$.  

The vectors $a_\ell^i$ are assumed to be known, but not fixed. Rather, we assume them to be independent Gaussian vectors in their respective spaces. The spaces $U_i$ from above are hence uniformly randomly chosen, and the bases of them as well. A statement of the form ''the solution of $\calP_{1,2}$ is equal to the ground truth signal with high probability'' will hence mean that the method works for a very high fraction of possible subspaces $U_i$ and sparsifying transforms in those respective spaces.

%Matrix inequalities

The signal model is as follows: we consider matrix tuples $\calZ = (Z_i)_{i \in [r]} \in \bigoplus_{i \in [r]} \C^{k_i,n_i}$, where each matrix $Z_i$ is assumed to be column-supported on some set $S_i$, i.e. only the columns $Z_i(j)$ for $j \in S_i$ are nonzero. $S_i$ has cardinality $s_i$. Alternatively, we will sometimes speak of matrix tuples being supported on sets $\calS= \bigotimes_{i\in [r]} S_i$, with the exact same meaning. Note that the $s_i$ are not assumed to be equal, and in particular, some of them can even be equal to zero (which corresponds to $Z_i =0$). To simplify the notation somewhat, we will use the following short-hands:
\begin{align*}
	s = &\sum_{i \in [r]} s_i, \quad n = \sum_{i \in [r]} n_i , \\
	 k^*=& \max_{i \in [r]} k_i , \quad k_* = \min_{i \in [r]} k_i
\end{align*}
 $\calP_{\calM}$ denotes the orthogonal projection on to the space of matrix tuples supported on $\calM \sse \bigotimes_{i \in [r]}[n_i]$. When convenient, we will also use the notation $\calX_\calS:= \calP_{\calS} \calX$.

It will at several places in the article come in handy to decompose the map $\calA$. We define $A^i : \C^{k_i, n_i} \to \C^q$ through
\begin{align*}
	A^i(Z) = \calA( (0, \dots, 0, \stackrel{i}{Z}, 0, \dots, 0) ),
\end{align*}
and also $A^i_j : \C^{k_i} \to \C^q$ through
\begin{align*}
	A^i_j\nu = A^i(\nu e_j^*).
\end{align*}

In our main result, we will assume the following asymptotics of the number of measurements $q$: 
\begin{align*}
	q \geqsim  \muplus^2 k^*s \log\left( 1+ \tfrac{\muplus^4 k^* \sum_{i \in [r]} s_ik_i}{\muminus^2k_*}\right)\cdot\log\left(\tfrac{nk^*}{\epsilon}\right).
\end{align*}
$\epsilon>0$ is an upper bound on the failure probability. This amount of measurements is, up to logarithmic term, more or less what could be expected: we are trying to recover a signal from a $\sum_{i \in [r]} s_i k_i$-dimensional structure embedded in a $\sum_{i \in [r]} n_i k_i$-dimensional space. Intuitively (compare for instance (Candes and Tao 2005 \cite{CandesTao2005})), this calls for 
\begin{align*}
	q \geqsim \big( \sum_{i \in [r]} s_i k_i\big) \log\big(\sum_{i \in [r]}n_i k_i\big)
\end{align*}
compressive measurements. Note that if the $k_i$ are not varying to much with $i$, we have $\sum_{i \in [r]} s_i k_i \approx k^* s$ and $\sum_{i \in [r]} n_i k_i \approx k^* n$. In particular, we have equality for $r=1$. \newline

To simplify reading the paper, let us summarize all of our assumptions in a list.

\begin{enumerate}[(a)]
	\item {\bf $a$-statistics.} The vectors $a^i_\ell \in \C^{n_i}$ are independent Gaussians.
	\item {\bf Parseval Frames.} For each $i$ we have
	\begin{align*}
	\sum_{\ell \in [q]} b_\ell^i (b_\ell^i)^* = \id.
\end{align*}

	\item {\bf $b$-norms under control:} There exists $\mu_{-}, \mu_{+} >0$ with
	\begin{align*}
		\tfrac{q}{k_i}\norm{b^i_\ell}_2^2 \in [\mu_{-}^2, \muplus^2]
\end{align*}
for each $i$ and $\ell$.
	 
	\item{\bf $q$-asymptotics } We have 
\begin{align*}
	q \geqsim  \muplus^2 k^*s \log\left( 1+ \tfrac{\muplus^4 k^* \sum_{i \in [r]} s_ik_i}{\muminus^2k_*}\right)\cdot\log\left(\tfrac{nk^*}{\epsilon}\right).
\end{align*}
\end{enumerate}

The main result of this paper reads as follows.
\begin{theo} \label{th:mainResult}
	Under the assumptions $(a)$ to $(d)$, every matrix tuple $\calZ_0$ with $Z_i^0$ $s_i$-column sparse is the unique solution of $\calP_{1,2}$ with a probability larger than $1-\epsilon$.
	
	In fact, $(a)$ to $(d)$ will even imply stable and robust recovery in the sense that for any matrix tuple $\calZ_0$ and $y = \calA(Z_0) + \nu$ with $\norm{\nu}_2 \leq \sigma$, with a probability larger than $1-3\epsilon$, any solution $\calZ^*$ of the program
	\begin{align*}
		\min \norm{\calZ}_{1,2} \text{ subject to } \norm{\calA(\calZ)-y}_2\leq \sigma
	\end{align*}
	obeys 
	\begin{align*}
		\norm{Z^*-\calZ_0}_F \leq C_1 \norm{\calP_{\calS^c}\calZ_0}_{1,2} + (C_2 + C_3 \sqrt{s})\sigma, 
	\end{align*}
	where $C_1$, $C_2$ and $C_2$ are universal constants.
\end{theo}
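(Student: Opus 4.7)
The plan is to follow the by-now standard dual-certificate route for stable and robust recovery, but every ingredient must be adapted from $\ell_1$ to $\ell_{1,2}$. The proof splits into three components: (i) a deterministic sufficient condition for stable and robust recovery via $\calP_{1,2}$, formulated as an inexact dual certificate together with a local conditioning bound on the column-supported subspace; (ii) verification that the restricted Gram operator $\calA^*\calA\calP_{\calS}$ is close to $\calP_{\calS}$ in operator norm; and (iii) construction of the certificate itself via a golfing scheme that exploits the Gaussianity and independence of the $a_\ell^i$ across $\ell$.

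For (i) I would generalize the inexact duality criterion of (Foucart--Rauhut 2013) from $\ell_1$ to $\ell_{1,2}$: if there exists $h \in \C^q$ such that, writing $\calY = \calA^*h$, one has $\|\calP_\calS \calY - \sgn(\calZ_0)\|_F \leq \alpha$, $\|\calP_{\calS^c}\calY\|_{\infty,2} \leq \beta$ and $\|h\|_2 \leq \gamma\sqrt{s}$ with $\alpha,\beta$ below explicit constants, and if in addition $\calA^*\calA\calP_\calS$ is well-conditioned on $\ran \calP_\calS$, then any minimizer of the noisy program satisfies the stated error bound. Here $\sgn(\calZ_0)$ is the column-wise sign tuple and $\|\cdot\|_{\infty,2}$ is the dual norm of $\|\cdot\|_{1,2}$. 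The proof mimics the scalar case line by line, replacing $\ell_1/\ell_\infty$ duality with $\ell_{1,2}/\ell_{\infty,2}$ duality.

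For (ii), each measurement contributes a rank-one operator $\calZ \mapsto \sum_{i,j \in S_i} \overline{\langle b_\ell^i, Z_i(j)\rangle\, a_\ell^i(j)}\,b_\ell^i e_j^*$ to $\calA^*\calA\calP_\calS$; by the Parseval property and the mean computation for complex Gaussians, its expectation is exactly $\calP_\calS$. Matrix Bernstein applied to these $q$ independent summands, using the $\muplus$-bound to control operator norms and subexponential moments, yields $\|\calA^*\calA\calP_\calS - \calP_\calS\| \leq \tfrac{1}{2}$ with the desired probability under assumption~(d); the logarithm $\log\bigl(1+\muplus^4 k^* \sum_{i} s_i k_i/(\muminus^2 k_*)\bigr)$ appearing there is precisely the effective-rank penalty in matrix Bernstein. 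For (iii) I would split $[q]$ into $L \sim \log(nk^*/\epsilon)$ disjoint blocks $T_\ell$ and run golfing: set $\calR_0 = \sgn(\calZ_0)$, $\calY^{(0)} = 0$, and update $\calY^{(\ell+1)} = \calY^{(\ell)} + (q/|T_{\ell+1}|)\calA_{T_{\ell+1}}^*\calA_{T_{\ell+1}} \calR_\ell$ with $\calR_{\ell+1} = \sgn(\calZ_0) - \calP_\calS \calY^{(\ell+1)}$. Two per-step contractions, $\|\calR_{\ell+1}\|_F \leq \tfrac{1}{2}\|\calR_\ell\|_F$ and $\|\calP_{\calS^c}(\calY^{(\ell+1)}-\calY^{(\ell)})\|_{\infty,2} \leq c\|\calR_\ell\|_F$, telescope to a certificate meeting (i).

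The main obstacle is the $\ell_{\infty,2}$ off-support bound at each golfing step. Unlike the scalar $\ell_\infty$ estimate of (Ling--Strohmer 2015), one must control a supremum taken not just over $\sum_i n_i$ column indices but also, within each index, over the unit ball of $\C^{k_i}$, of quadratic-like forms in the independent complex Gaussians $a_\ell^i$ coupled through the fixed frame vectors $b_\ell^i$. Conditioning on $\calR_\ell$, which is independent of the fresh block $T_{\ell+1}$ by construction, I would bound each column via a Hanson--Wright or Bernstein-type tail estimate at the correct subexponential scale, and then union-bound over an $\varepsilon$-net of cardinality at most $(C/\varepsilon)^{k^*}$ inside each off-support column of $\C^{k_i}$. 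The factor $k^*$ in the asymptotics enters exactly through the log-cardinality of these nets, and the $\log(nk^*/\epsilon)$ factor aggregates the union bound over the $L$ golfing steps and the $n$ off-support columns. Plugging the resulting certificate into (i) yields stability and robustness; the $\sqrt{s}\,\sigma$ contribution comes from $\|\calP_\calS\calZ\|_{1,2} \leq \sqrt{s}\,\|\calP_\calS\calZ\|_F$ combined with feasibility $\|\calA(\calZ^*-\calZ_0) - \nu\|_2 \leq 2\sigma$ and the local conditioning from (ii).
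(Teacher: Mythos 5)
Your components (i) and (ii) match the paper's route: the paper proves exactly the $\ell_{1,2}$-generalization of the Foucart--Rauhut inexact-duality criterion (Lemma \ref{lem:DetCond}) and bounds $\norm{\calP_\calS\calA^*\calA\calP_\calS-\calP_\calS}_{F\to F}$ by matrix Bernstein (Lemma \ref{lem:RIP}). One omission already at stage (i): well-conditioning on $\ran\calP_\calS$ plus the certificate is not enough to close the error bound; you also need the off-support cross-term estimate $\max_{(i,j)\in\calS^c}\norm{\calP_\calS\calA^*\calA^i_j}_{2\to F}\leq\beta$, which controls $\norm{\calP_\calS\calA^*\calA(\calH_{\calS^c})}_F$ in terms of $\norm{\calH_{\calS^c}}_{1,2}$ and is a separate probabilistic statement in the paper (Lemma \ref{lem:OffDiagonal}). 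A line-by-line transcription of the scalar proof would surface this, but as stated your deterministic condition is incomplete.

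The genuine gap is in step (iii). The golfing scheme requires each block operator $(q/\abs{T_{\ell+1}})\,\calP_\calS\calA_{T_{\ell+1}}^*\calA_{T_{\ell+1}}\calP_\calS$ to concentrate around $\calP_\calS$, which in turn requires its expectation to be $\calP_\calS$. Taking the expectation only over the Gaussians $a_\ell^i$ leaves the deterministic factor $\sum_{\ell\in T_{\ell+1}} b^i_\ell(b^i_\ell)^*$, and the Parseval property (b) is assumed only over the \emph{full} index set $[q]$: on a sub-block this sum can be far from $\tfrac{\abs{T_{\ell+1}}}{q}\id$ (e.g.\ rank-deficient if the $b^i_\ell$ with $\ell\in T_{\ell+1}$ are nearly collinear; assumption (c) bounds individual norms, not directions). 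There is no randomness in the $b^i_\ell$ to repair this, so the per-step contraction $\norm{\calR_{\ell+1}}_F\leq\tfrac12\norm{\calR_\ell}_F$ fails under assumptions (a)--(d) alone. The paper explicitly avoids golfing for precisely this reason and instead takes the \emph{exact} least-squares certificate $\upsilon=\calA_\calS(\calA_\calS^*\calA_\calS)^{-1}\widehat{\calZ}_0$, $\calY=\calA^*\upsilon$, which gives $\calY_\calS=\calP_\calS\widehat{\calZ}_0$ (so $\eta=0$) and needs only the global near-isometry already established in Lemma \ref{lem:RIP}; the off-support columns $Y_i(j)$, $j\notin S_i$, are then bounded directly by the rectangular matrix Bernstein inequality (the $k^*$-dependence enters through the dimensional factor there rather than through your $\varepsilon$-net). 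To repair your argument you would either have to add a blockwise near-tightness assumption on the frames or switch to the paper's exact certificate.
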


\begin{rem} \label{rem:12Better}
	The assumptions we have made are the same as in (Ling and Strohmer 2015 \cite{ling2015sparseDeconv}), except for the $(d)$-assumption. The mentioned paper only deals with the case $r=1$, but in that case, their equivalent of the $(d)$-assumption reads
	\begin{align}
		\frac{q}{\log^2(q)} \geqsim \muplus^2 ks \log(nk). \tag*{$(d)_*$}
	\end{align}
	If we put $\epsilon = q^{1-\alpha}$ in our assumption (d) (as is made in the mentioned article) and assume that $\tfrac{\muplus^2}{\muminus^2}$ is close to one (note that $k^*=k_*$ in the case $r=1$), we arrive at
	\begin{align*}
		q \geqsim \muplus^2 ks \log(1+ \muplus^2sk) \log(nkq^{\alpha-1}). \tag*{$(d)_{r=1}$}
	\end{align*} 
	Compared to $(d_*)$, we gain a $\log(q)$-term but lose a $\log(sk)$-term. Since $\log(q) \geqsim \log(1+\mu^2sk)$ for $q \geqsim sk$, we see that $(d)_*$ in fact implies $(d_{r=1})$. Hence, the $(d)_{r=1}$-assumption slightly weaker than $(d)_*$. The reason for this improvement is of proof-technical nature: We apply a strong version of the Matrix Bernstein inequality, and it is probably possible to obtain this rate also for $\ell_1$-minimization.
		
	Looking a bit closer, we however find a way in which $\ell_{1,2}$-minimization inevitably outperforms $\ell_1$-minimization: Under assumptions $(a)$ to $(c)$, together with $(d)_*$, the authors of the mentioned article prove that the regularized  $\ell_1$-minimization program, with a probability larger than $q^{1-\alpha}$ (the implicit constant is dependent on the parameter $\alpha$), obeys
	\begin{align*}
		\norm{Z^*-Z_0}_{F} \leq (C_1 + C_2 \sqrt{ks})\sigma
	\end{align*}
	for every $s$-column sparse $Z_0$. This error bound is worse than the one we prove for exactly column-sparse signals (since $\sqrt{ks}\geq\sqrt{s}$), and also does not account for small deviations from the sparsity assumption. Hence, our analysis indicates that $\ell_{1,2}$-minimization really works better than $\ell_{1}$-minimization for recovering $s$-column sparse matrices using the considered type of measurements.
\end{rem}

\subsection{Outline of the proof.}

The proof of Theorem \ref{th:mainResult} will inevitably be technically quite involved. In the following, we will describe its basic route. All details are given in the next section.  We again point out that we closely follow the paper (Ling and Strohmer 2015 \cite{ling2015sparseDeconv}).

The start of the argument is the following lemma. It is a generalization of (Foucart and Rauhut 2013 \cite[Th. 4.33]{MathIntroToCS}), which is a corresponding statement about $\ell_1$-minimization. It will make use of the regularized program 
\begin{align}
	\min \norm{ \calZ}_{1,2} \text{ subject to } \norm{\calA(\calZ)-y}_2 \leq \sigma	\tag{$\calP_{1,2}^\sigma$}.
\end{align}
To simplify the notation, let us introduce the short hand $\widehat{\calX}$ for the matrix tuple formed by normalizing each column of each submatrix of $\calX$. To be precise,
\begin{align*}
	\widehat{X}_i(j) := \begin{cases} \tfrac{X_i(j)}{\norm{X_i(j)}_2} &\text{ if } X_i(j) \neq 0 \\
	0 &\text{ else.} \end{cases}
\end{align*}
%Deterministic Condition

\begin{lem} Let $\calZ_0 \in \bigoplus_{i \in [r]}\C^{k_i,n_i}$ and let $\calS \sse \bigotimes_{i \in [r]}[n_i]$ be arbitrary. Consider a linear map $\calA$ from $\bigoplus_{i \in [r]} \C^{k_i,n_i} $ to $ \C^q$ and noisy measurements $y=\calA(Z_0)+n$ with $\norm{n}_2 \leq \sigma$.  \label{lem:DetCond}

Suppose that 
\begin{align*}
	\norm{\calP_{\calS} \calA^*\calA\calP_\calS - \calP_\calS}_{F\to F} \leq \delta \quad \max_{(i,j) \in \calS^c} \norm{\calP_S \calA^*\calA^i_j}_{2\to F} \leq \beta
\end{align*} 
for some $\delta \in [0, 1)$ and $\beta \geq 0$. Also suppose that there exists a matrix tuple $\calY = \calA^* \upsilon$ (an \emph{approximate dual certificate}) with
\begin{align*}
	\norm{\calP_\calS(\calY-\widehat{\calZ}_0)}\leq \eta, \quad \norm{\calY_{\calS^c}}_{\infty,2} \leq \theta \text{ and } \norm{\upsilon}_2 \leq \tau \sqrt{s}.
\end{align*}
 If $\rho= \theta + \tfrac{\eta \beta}{1-\delta} <1$, any solution $\cal{Z}^*$ of $\calP_{1,2}^\sigma$ obeys
\begin{align*}
	\norm{\calZ^*-\calZ_0}_F \leq C_1 \norm{\calP_{\calS^c}\calZ_0}_{1,2} + (C_2 + C_3 \sqrt{s})\sigma.
\end{align*}
The constants are given by
\begin{align*}
	C_1 = \frac{2}{1-\rho} + \frac{2\beta}{(1-\rho)(1-\delta)} \quad C_2 = \frac{2\mu \eta }{1-\rho} + \frac{2\beta \mu \eta}{(1-\rho)(1-\delta)} + 2\mu \quad C_3 = \frac{2\tau}{1-\rho} + \frac{2\beta \tau}{(1-\rho)(1-\delta)},
\end{align*}
where we defined $\mu= \tfrac{\sqrt{1+\delta}}{1-\delta}$.
\end{lem}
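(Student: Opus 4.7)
Set $\calH := \calZ^* - \calZ_0$. Both $\calZ_0$ and $\calZ^*$ are feasible for $\calP_{1,2}^\sigma$, so $\norm{\calA(\calH)}_2 \le 2\sigma$. The strategy mirrors the dual-certificate proof for $\ell_1$-minimization in \cite{MathIntroToCS}: first I would bound $\norm{\calH_{\calS^c}}_{1,2}$ using the minimality of $\calZ^*$ together with the approximate certificate $\calY$, then control $\norm{\calH_\calS}_F$ through the two operator-type estimates, and finally assemble $\norm{\calH}_F \le \norm{\calH_\calS}_F + \norm{\calH_{\calS^c}}_F \le \norm{\calH_\calS}_F + \norm{\calH_{\calS^c}}_{1,2}$, using that the Frobenius norm is majorised by the $\ell_{1,2}$-norm on any matrix tuple.

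For the first step, the column-disjointness of $\calS$ and $\calS^c$ gives the clean split $\norm{\calZ_0+\calH}_{1,2} = \norm{\calZ_{0,\calS}+\calH_\calS}_{1,2} + \norm{\calZ_{0,\calS^c}+\calH_{\calS^c}}_{1,2}$. A subgradient-type lower bound with the sign tuple $\widehat{\calZ}_{0,\calS}=\calP_\calS\widehat{\calZ}_0$ handles the first summand, and the column-wise triangle inequality handles the second. Combined with $\norm{\calZ^*}_{1,2}\le\norm{\calZ_0}_{1,2}$ this gives
\begin{align*}
 \norm{\calH_{\calS^c}}_{1,2} \le 2\norm{\calP_{\calS^c}\calZ_0}_{1,2} - \Re\sprod{\widehat{\calZ}_{0,\calS},\calH}.
\end{align*}
The certificate then enters through $\sprod{\widehat{\calZ}_{0,\calS},\calH} = \sprod{\widehat{\calZ}_{0,\calS}-\calP_\calS\calY,\calH_\calS} + \sprod{\upsilon,\calA\calH} - \sprod{\calY_{\calS^c},\calH_{\calS^c}}$. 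The three terms are dominated respectively by $\eta\norm{\calH_\calS}_F$, by $2\tau\sqrt{s}\,\sigma$, and by $\theta\norm{\calH_{\calS^c}}_{1,2}$, the last through the Hölder-type duality $|\sprod{M,N}|\le \norm{M}_{\infty,2}\norm{N}_{1,2}$. This yields an inequality of the form $(1-\theta)\norm{\calH_{\calS^c}}_{1,2} \le 2\norm{\calP_{\calS^c}\calZ_0}_{1,2} + \eta\norm{\calH_\calS}_F + 2\tau\sqrt{s}\,\sigma$.

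For the second step I would decompose $\calP_\calS\calA^*\calA\,\calH = (\calP_\calS + E)\calH_\calS + \calP_\calS\calA^*\calA\,\calH_{\calS^c}$ with $\norm{E}_{F\to F}\le\delta$. The left-hand side has Frobenius norm bounded by $2\sigma\sqrt{1+\delta}$, obtained by duality from $\norm{\calA \calP_\calS\calM}_2 \le \sqrt{1+\delta}\,\norm{\calM}_F$, which itself follows from the first hypothesis. The cross term splits column-wise as $\calP_\calS\calA^*\calA\,\calH_{\calS^c} = \sum_{(i,j)\in\calS^c}\calP_\calS\calA^*A^i_j\,H_i(j)$, and each summand is controlled by the second hypothesis, producing the clean bound $\norm{\calP_\calS\calA^*\calA\,\calH_{\calS^c}}_F \le \beta\norm{\calH_{\calS^c}}_{1,2}$. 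Inverting $\calP_\calS + E$ on the support then gives $\norm{\calH_\calS}_F \le 2\mu\sigma + \tfrac{\beta}{1-\delta}\norm{\calH_{\calS^c}}_{1,2}$.

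Substituting this into the $\ell_{1,2}$-inequality and using $\rho<1$ to solve for $\norm{\calH_{\calS^c}}_{1,2}$ produces the desired bound on the off-support part; the final Frobenius estimate is then arithmetic reassembly, with the advertised constants $C_1, C_2, C_3$ falling out directly. I expect the main subtlety to be the cross-term bound $\norm{\calP_\calS\calA^*\calA\,\calH_{\calS^c}}_F \le \beta\norm{\calH_{\calS^c}}_{1,2}$: this is where the block-column structure of the tuple genuinely departs from the scalar $\ell_1$ setting, and it is precisely here that the mixed norm $\norm{\cdot}_{2\to F}$ on $\calP_\calS\calA^*A^i_j$ appears as the natural replacement for the element-wise maximum correlation in the classical proof.
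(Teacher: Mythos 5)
Your proposal is correct and follows essentially the same route as the paper's own proof: the same subdifferential-based splitting of the $\ell_{1,2}$-norm leading to $\norm{\calH_{\calS^c}}_{1,2} \leq 2\norm{\calP_{\calS^c}\calZ_0}_{1,2} - \mathrm{Re}\sprod{\calP_\calS\widehat{\calZ}_0,\calH_\calS}$, the same three-way certificate decomposition bounded by $\eta$, $\tau$ and $\theta$, the same $\delta$/$\beta$-based control of $\norm{\calH_\calS}_F$ including the column-wise cross-term estimate, and the same final reassembly via $\norm{\calH_{\calS^c}}_F \leq \norm{\calH_{\calS^c}}_{1,2}$. The constants fall out exactly as stated, so nothing further is needed.
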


	The road ahead is now clear: what we need to do is to prove that with the assumptions we have made, the parameters $\delta$ and $\beta$ will probably be small. We will also have to construct an approximate dual certificate $\calY$ with $\eta$, $\theta$ and $\tau$ as small as possible. Most of these proofs in the following will follow this general structure: 
	\begin{enumerate}
 \item Write the stochastic matrix at hand as a sum of random, centered matrices. 
 \item Estimate the parameters Orlicz-Norms (see Section \ref{sec:Proofs}) and variances of the matrices involved. 
 \item Apply results about norm concentration of sums of independent matrices (see Theorem \ref{th:SwissArmyKnife} and Corollary \ref{cor:SwissNonRectangular}).
\end{enumerate}

Bounding $\delta$ and $\beta$ is particularly natural using the above strategy. The following results hold.
\begin{lem} \label{lem:RIP}
	Under the assumptions $(a)$ to $(d)$, we have
		\begin{align} \label{eq:RIP}
		\norm{\calP_\calS \calA^* \calA\calP_\calS - \calP_\calS}_{F \to F} \leq \frac{1}{4}
	\end{align}
	with a probability larger than $1-\epsilon$.
\end{lem}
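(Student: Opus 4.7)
The plan is to follow the three-step recipe announced in the introduction: decompose $\calP_\calS \calA^*\calA \calP_\calS - \calP_\calS$ into a sum of independent centered self-adjoint random operators indexed by $\ell \in [q]$, control their $\psi_1$-Orlicz norm and matrix variance, and conclude by the subexponential matrix Bernstein inequality of Theorem~\ref{th:SwissArmyKnife}. Writing $L_\ell(\calZ) := \sum_{i \in [r]} \sprod{b^i_\ell, Z_i a^i_\ell}$ so that $\calA(\calZ) = \sum_\ell L_\ell(\calZ)\, e_\ell$, the adjoint is $L_\ell^*(1) = (b^i_\ell (a^i_\ell)^*)_{i \in [r]}$, whence
\begin{align*}
\calP_\calS \calA^*\calA \calP_\calS \;=\; \sum_{\ell \in [q]} \calB_\ell^\calS\, \sprod{\calB_\ell^\calS,\, \cdot},
\end{align*}
where $\calB_\ell^\calS$ is the matrix tuple whose $i$-th block has columns $b^i_\ell\, \overline{a^i_\ell(j)}$ for $j \in S_i$ and vanishes outside $S_i$. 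Independence of the $a^i_\ell$ across $i$ and $\ell$, the identity $\erw{a^i_\ell (a^i_\ell)^*} = \id$, and the Parseval assumption~$(b)$ together yield $\erw{\calA^*\calA} = \id$. Hence the quantity to bound is a sum of $q$ independent centered random operators
\begin{align*}
X_\ell \;:=\; \calB_\ell^\calS\, \sprod{\calB_\ell^\calS,\, \cdot} \;-\; \erw{\calB_\ell^\calS\, \sprod{\calB_\ell^\calS,\, \cdot}}
\end{align*}
on the $\bigl(\sum_{i} s_i k_i\bigr)$-dimensional Hilbert space $\calP_\calS\bigl(\bigoplus_i \C^{k_i,n_i}\bigr)$ equipped with the Frobenius inner product.

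For the $\psi_1$-Orlicz bound I would observe that each summand is rank one with operator norm
\begin{align*}
\norm{\calB_\ell^\calS}_F^2 = \sum_{i \in [r]} \norm{b^i_\ell}_2^2 \sum_{j \in S_i} |a^i_\ell(j)|^2 \;\leq\; \tfrac{\muplus^2 k^*}{q}\, \sum_{i \in [r]} \sum_{j \in S_i} |a^i_\ell(j)|^2
\end{align*}
by assumption~$(c)$, i.e.\ a weighted sum of independent $\chi^2$-variables of total degree comparable to $s$, whose mean and $\psi_1$-norm are both of order $\muplus^2 k^* s/q$. Centering changes this only by a constant factor, so $\norm{X_\ell}_{F\to F}$ has $\psi_1$-norm bounded by a constant multiple of $\muplus^2 k^* s/q$. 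For the variance, the rank-one identity $\bigl(\calB_\ell^\calS \sprod{\calB_\ell^\calS, \cdot}\bigr)^2 = \norm{\calB_\ell^\calS}_F^2\, \calB_\ell^\calS\, \sprod{\calB_\ell^\calS,\cdot}$ gives, for every $\calZ$ supported on $\calS$ with $\norm{\calZ}_F = 1$,
\begin{align*}
\sprod{\textstyle\sum_\ell \erw{X_\ell^2}\, \calZ,\, \calZ} \;\leq\; \sum_\ell \erw{ \norm{\calB_\ell^\calS}_F^2\, |L_\ell(\calZ)|^2 }.
\end{align*}
Cauchy--Schwarz in the expectation combined with the Gaussian fourth-moment bound $\erw{|L_\ell(\calZ)|^4} \leq C\,\erw{|L_\ell(\calZ)|^2}^2$ reduces the right-hand side to $C\, \max_\ell \sqrt{\erw{\norm{\calB_\ell^\calS}_F^4}}\, \sum_\ell \erw{|L_\ell(\calZ)|^2}$. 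The first factor is $\lesssim \muplus^2 k^* s/q$ by the same $\chi^2$-moment calculation, while the second factor collapses by $(b)$ to $\sum_i \norm{Z_i}_F^2 = 1$. Altogether, $\bigl\Vert \sum_\ell \erw{X_\ell^2}\bigr\Vert_{F\to F} \lesssim \muplus^2 k^* s/q$.

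Plugging both parameters into the subexponential matrix Bernstein bound with threshold $t = 1/4$ produces a failure probability of order $D\, \exp\bigl(-c\, q/(\muplus^2 k^* s)\bigr)$, where $D$ is the ambient (or intrinsic) dimension of the $\calS$-supported space, dominated by $\sum_i s_i k_i$. The logarithmic factor built into assumption~$(d)$ exactly absorbs $\log(D/\epsilon)$, so the probability is at most $\epsilon$. The main technical obstacle I anticipate is the variance computation: the operator $X_\ell$ couples the tuple index $i$ with the column index $j \in S_i$, so one has to unravel a mixed fourth-order Gaussian moment in just the right way for assumption~$(b)$ to collapse the $\ell$-sum cleanly. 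This is exactly the point where the combination $k^* s$ (rather than a larger quantity such as $\sum_i s_i k_i$) emerges from the argument.
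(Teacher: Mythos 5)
Your decomposition is exactly the paper's: the operator $\Psi_\ell = \calB_\ell^\calS\sprod{\calB_\ell^\calS,\cdot}$ you write down is the same rank-one summand the paper uses, your Orlicz bound $\norm{\Psi_\ell}_{F\to F}\leq \sum_i\norm{b^i_\ell}_2^2\norm{\alpha^i_\ell}_2^2$ is the same estimate (the paper keeps the slightly sharper $\tfrac{\muplus^2}{q}\sum_i s_ik_i$ where you relax to $\tfrac{\muplus^2 k^*s}{q}$; note $\sum_i s_ik_i\leq k^*s$, so your parenthetical claim that $k^*s$ is the \emph{smaller} of the two is backwards, though either suffices under assumption $(d)$), and the conclusion via Theorem \ref{th:SwissArmyKnife} is identical. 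The one place you genuinely diverge is the variance: the paper computes $\erw{\Psi_\ell^2}-\erw{\Psi_\ell}^2$ \emph{exactly} via the Gaussian fourth-moment identity of Lemma \ref{lem:Gauss} and obtains the two-sided estimate $\sigma^2\asymp\tfrac{\mu^2}{q}\sum_i s_ik_i$, whereas your Cauchy--Schwarz-plus-hypercontractivity argument is slicker but delivers only an upper bound.

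That difference is not cosmetic here, and it is where your proof has a gap. The tail bound of Theorem \ref{th:SwissArmyKnife} is not monotone in $\sigma^2$: the quantity $\log\bigl(1+\tfrac{C_2R^2q}{\sigma^2}\bigr)$ multiplies $R$ in the exponent's denominator, so an \emph{upper} bound on $\sigma^2$ alone does not control it -- if $\sigma^2$ were much smaller than $R^2q$, this logarithm could swamp the log factor budgeted in assumption $(d)$. This is precisely why $(d)$ carries the factor $\muminus^2k_*$ in the denominator inside its logarithm, and why the paper insists on the matching lower bound $\sigma^2\geqsim\tfrac{\muminus^2}{q}\sum_i s_ik_i$. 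To close your argument you would need to supplement the upper bound with a lower bound on $\sprod{\sum_\ell\erw{X_\ell^2}\calZ,\calZ}$ (or on whatever quantity you feed into the theorem as $\sigma^2$), which your Cauchy--Schwarz step throws away; the exact fourth-moment computation in Lemma \ref{lem:Gauss} is the paper's way of getting both directions at once. Everything else in your outline -- centering via $\erw{\calA^*\calA}=\id$ from the Parseval assumption, the intrinsic dimension $\sum_i s_ik_i$, the absorption of $\log(D/\epsilon)$ into $(d)$ -- matches the paper's proof.
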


%Off-diagonal operators

\begin{lem}
	\label{lem:OffDiagonal}  Suppose that \eqref{eq:RIP} and assumption $(d)$ is true. Then
	\begin{align*}
		\max_{(i,j)\in \calS^c} \norm{\calP_S \calA^* \calA^i_j}_{2 \to F} \leq \frac{5}{4}
	\end{align*}
	with a probability greater than $1-\epsilon$.
\end{lem}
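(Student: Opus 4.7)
The plan is to reduce the bound to the already-assumed RIP \eqref{eq:RIP} together with a mild single-column companion of it. Fix $(i_0,j_0)\in\calS^c$ and let $\calE:\C^{k_{i_0}}\to\bigoplus_{i\in[r]}\C^{k_i,n_i}$ be the isometry sending $\nu$ to the tuple with $\nu e_{j_0}^{\ast}$ in slot $i_0$ and zeros elsewhere, so that $A^{i_0}_{j_0}=\calA\calE$. Since $(i_0,j_0)\in\calS^c$ we have $\calP_\calS\calE=0$. Writing the operator norm by duality and passing to the adjoint of $\calA$,
\[
	\norm{\calP_\calS\calA^\ast A^{i_0}_{j_0}}_{2\to F} \;=\; \sup_{\norm{\nu}_2=1,\ \norm{\calX}_F=1,\ \calX=\calP_\calS\calX} \bigl|\sprod{\calA\calX,\,A^{i_0}_{j_0}\nu}_{\C^q}\bigr|,
\]
which Cauchy--Schwarz factors cleanly as $\norm{\calA\calP_\calS}_{F\to 2}\cdot\norm{A^{i_0}_{j_0}}_{2\to 2}$.

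The first factor is immediate: $\norm{\calA\calP_\calS}_{F\to 2}^2=\norm{\calP_\calS\calA^\ast\calA\calP_\calS}_{F\to F}\leq 1+\tfrac{1}{4}=\tfrac{5}{4}$ by \eqref{eq:RIP}. For the second, a direct computation combined with the Parseval assumption $(b)$ gives
\[
	(A^{i_0}_{j_0})^\ast A^{i_0}_{j_0}=\sum_{\ell\in[q]}\abs{a^{i_0}_\ell(j_0)}^2\, b^{i_0}_\ell (b^{i_0}_\ell)^\ast, \qquad \erw{(A^{i_0}_{j_0})^\ast A^{i_0}_{j_0}}=\id_{\C^{k_{i_0}}},
\]
so it suffices to show $\norm{(A^{i_0}_{j_0})^\ast A^{i_0}_{j_0}-\id}_{2\to 2}\leq \tfrac{1}{4}$. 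Multiplying the two $\sqrt{5/4}$ bounds then yields exactly $\tfrac{5}{4}$, as claimed.

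The required single-column concentration is a sum of $q$ independent, centred, rank-one $k_{i_0}\times k_{i_0}$ matrices $(\abs{a^{i_0}_\ell(j_0)}^2-1)\, b^{i_0}_\ell(b^{i_0}_\ell)^\ast$. Each summand has Orlicz-$\psi_1$ norm on the order of $\norm{b^{i_0}_\ell}_2^2\leq\muplus^2 k^\ast/q$, and the Bernstein variance proxy is of the same order, so the $\psi_1$-matrix Bernstein inequality (Theorem \ref{th:SwissArmyKnife}/Corollary \ref{cor:SwissNonRectangular}) delivers the $\tfrac{1}{4}$-bound with failure probability at most $\epsilon/n$ as soon as $q\geqsim \muplus^2 k^\ast\log(k^\ast n/\epsilon)$, which is comfortably implied by $(d)$. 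A union bound over the at most $n$ pairs $(i_0,j_0)\in\calS^c$ then closes the argument. The only mildly delicate step is this Bernstein application, since the squared Gaussian factor is only sub-exponential (forcing the $\psi_1$-version); however, the ambient dimension here is $k^\ast$ rather than the $sk^\ast$ that drives Lemma \ref{lem:RIP}, so the measurement requirement is strictly weaker than that needed for the RIP itself and presents no real obstacle.
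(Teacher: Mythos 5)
Your proposal is correct and follows essentially the same route as the paper: factor $\norm{\calP_\calS\calA^\ast A^i_j}_{2\to F}\leq\norm{\calP_\calS\calA^\ast}_{2\to F}\,\norm{A^i_j}_{2\to2}$, bound the first factor by $\sqrt{5/4}$ via \eqref{eq:RIP}, and control $\norm{A^i_j}_{2\to2}^2=\norm{\calP_{(i,j)}\calA^\ast\calA\calP_{(i,j)}}_{F\to F}$ by a single-column matrix Bernstein concentration with $R,\sigma^2\leqsim\muplus^2 k^\ast/q$ plus a union bound. The only cosmetic difference is that you write out $(A^i_j)^\ast A^i_j$ explicitly rather than citing the argument of Lemma \ref{lem:RIP} for the one-column support set; the estimates and constants agree.
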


It now only remains to construct the dual certificate $\calY$. Although Lemma \ref{lem:DetCond} only calls for an \emph{approximate} dual certificate, we will construct an exact one, just as in the paper (Ling and Strohmer 2015 \cite{ling2015sparseDeconv}). This does not only yield good results, but has the main technical advantage that it avoids using the so-called \emph{golfing scheme} (Gross 2011 \cite{gross2011golfing}), which would need further assumptions on our frames $(b^i_\ell)$. It should be noted that this comes at the cost of us not being able to consider more structured $(a^i_\ell)$-constructions. For more details on this issue, see (Ling and Strohmer 2015 \cite{strohmer2015demixing,ling2015sparseDeconv}.)

So, the certificate we will use is defined as follows:
\begin{align}
	\upsilon = \calA_\calS (\calA_\calS^*\calA_\calS)^{-1} \widehat{\calZ}_0, \quad \calY = \calA^*\upsilon , \label{eq:dualCert}
\end{align}
where we introduced the short-hand $\calA_\calS = \calA \calP_\calS$ The following lemma shows that it with very high probability will have the properties we need.

\begin{lem} \label{lem:dualCert} Under assumptions (a)-(d) and additionally that \eqref{eq:RIP} is true, $\upsilon$ and $\calY$ defined in \eqref{eq:dualCert} obeys
\begin{align*}
	\calY_\calS = \calP_\calS \widehat{\calZ}_0 , \quad \norm{\calY_{\calS^c}}_{\infty, 2} \leq \frac{1}{2} \quad \norm{\upsilon} \leq \tfrac{2\sqrt{5}}{3} \sqrt{s}
\end{align*}
with a probability larger than $1-\epsilon$.
\end{lem}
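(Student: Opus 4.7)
The first two conclusions are essentially algebraic consequences of the RIP condition \eqref{eq:RIP}, which guarantees that $\calA_\calS^*\calA_\calS$ has spectrum in $[3/4,5/4]$ on $\ran(\calP_\calS)$. For the identity $\calY_\calS=\calP_\calS\widehat{\calZ}_0$, one uses that $\calP_\calS\calA^*=\calA_\calS^*$ and that $\widehat{\calZ}_0$ is supported on $\calS$; substituting the definition of $\upsilon$ yields $\calY_\calS=\calA_\calS^*\calA_\calS(\calA_\calS^*\calA_\calS)^{-1}\widehat{\calZ}_0=\widehat{\calZ}_0=\calP_\calS\widehat{\calZ}_0$. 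For the bound on $\norm{\upsilon}$, one factors $\norm{\upsilon}\leq \norm{\calA_\calS}_{F\to 2}\,\norm{(\calA_\calS^*\calA_\calS)^{-1}}_{F\to F}\,\norm{\widehat{\calZ}_0}_F$; the RIP gives $\norm{\calA_\calS}_{F\to 2}\leq \sqrt{5}/2$ and $\norm{(\calA_\calS^*\calA_\calS)^{-1}}\leq 4/3$, while $\norm{\widehat{\calZ}_0}_F=\sqrt{s}$ because $\widehat{\calZ}_0$ consists of exactly $s$ unit columns. This produces the stated $(2\sqrt{5}/3)\sqrt{s}$.

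The main obstacle is the $\ell_{\infty,2}$-bound on $\calY_{\calS^c}$. The plan is to condition on the random variables that determine $\calA_\calS$, namely the Gaussian scalars $(a^i_\ell)_{j'}$ with $j'\in S_i$, and exploit the residual Gaussianity in the remaining coordinates. After such conditioning, $\upsilon$ is deterministic, while by independence of the coordinates of each $a^i_\ell$ the scalars $(a^i_\ell)_j$ with $(i,j)\in\calS^c$ remain i.i.d.\ standard complex Gaussians. Unwinding the definition of $\calA^*$ gives $(\calY_i)(j)=\sum_\ell \upsilon_\ell\,\overline{(a^i_\ell)_j}\,b^i_\ell$, so conditionally $(\calY_i)(j)$ is a complex Gaussian vector in $\C^{k_i}$ with covariance $\Sigma=\sum_\ell |\upsilon_\ell|^2 b^i_\ell (b^i_\ell)^*$. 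Assumption $(c)$, together with the bound on $\norm{\upsilon}$ just proved, gives $\tr\Sigma=\sum_\ell|\upsilon_\ell|^2\norm{b^i_\ell}_2^2\leq \tfrac{\mu_+^2 k^*}{q}\norm{\upsilon}_2^2\leq \tfrac{20\mu_+^2 k^* s}{9q}$, and the trivial estimate $\norm{\Sigma}\leq\tr\Sigma$.

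A standard Borell-type Gaussian concentration inequality for the norm then gives $\norm{(\calY_i)(j)}_2\leq\sqrt{\tr\Sigma}+C\sqrt{\norm{\Sigma}\log(n/\epsilon)}$ with conditional probability $1-\epsilon/n$. Union-bounding over the at most $n$ pairs $(i,j)\in\calS^c$, assumption $(d)$ makes both terms no larger than $1/4$, so that $\norm{\calY_{\calS^c}}_{\infty,2}\leq 1/2$ with conditional probability $1-\epsilon$; intersecting with the event from Lemma \ref{lem:RIP} then yields the unconditional statement. The step I expect to need the most care is the spectral control of $\Sigma$: if the trivial $\norm{\Sigma}\leq\tr\Sigma$ proves too wasteful for the exact logarithmic factor appearing in $(d)$, one can refine it via the Parseval identity $\sum_\ell b^i_\ell(b^i_\ell)^*=\id$ to isolate a factor of $\max_\ell|\upsilon_\ell|^2$. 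Note that this whole approach critically relies on the Gaussianity of the $a^i_\ell$, which is precisely what lets us avoid the golfing scheme.
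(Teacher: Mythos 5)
Your proposal is correct, and for the only genuinely probabilistic part of the lemma --- the bound $\norm{\calY_{\calS^c}}_{\infty,2}\leq\tfrac12$ --- it takes a different route from the paper. The paper keeps all the randomness in play: it sets $\widehat{\calY}=(\calA_\calS^*\calA_\calS)^{-1}\widehat{\calZ}_0$, writes $Y_i(j)\sim\sum_{\ell\in[q]}\varphi_\ell$ with $\varphi_\ell=\sum_{\kappa}\sprod{b^\kappa_\ell,\widehat{Y}_\kappa\alpha^\kappa_\ell}\,b^i_\ell\gamma_\ell$ (a fresh Gaussian $\gamma_\ell$ standing in for $a^i_\ell(j)$, which is legitimate since $j\notin S_i$), estimates the $\psi_1$-norm and variance of the $\varphi_\ell$ via \eqref{eq:OrliczIndep}, \eqref{eq:GaussOrlicz}, assumption (c) and $\norm{\widehat{\calY}}_F\asymp\sqrt{s}$, and then feeds everything into the same matrix Bernstein machinery (Corollary \ref{cor:SwissNonRectangular}) used for Lemmas \ref{lem:RIP} and \ref{lem:OffDiagonal}, paying a dimensional factor $k^*+1$ in the union bound. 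You instead condition on the in-support coordinates $(a^\kappa_\ell)_{S_\kappa}$, which determine $\calA_\calS$ and hence $\upsilon$, and observe that for $(i,j)\in\calS^c$ the column $Y_i(j)=\sum_\ell\upsilon_\ell\,\overline{a^i_\ell(j)}\,b^i_\ell$ is then \emph{exactly} a Gaussian vector with covariance $\Sigma=\sum_\ell\abs{\upsilon_\ell}^2b^i_\ell(b^i_\ell)^*$, so Borell--TIS together with $\tr\Sigma\leq\tfrac{\muplus^2k^*}{q}\norm{\upsilon}_2^2$ and assumption (d) finishes the job. Your route is more elementary (no Orlicz norms, no Bernstein, no dilation), and it makes explicit a conditioning that is only implicit in the paper's decomposition, where every summand $\varphi_\ell$ contains $\widehat{\calY}$ and therefore depends on the whole collection $(\alpha^\kappa_\ell)_{\ell\in[q]}$; what the paper's version buys is a uniform toolbox across all three probabilistic lemmas. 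Two small remarks: the trivial estimate $\norm{\Sigma}\leq\tr\Sigma$ is already sufficient, since assumption (d) supplies the factor $\log(nk^*/\epsilon)$ with room to spare, so the Parseval-based refinement you keep in reserve is not needed; and your union bound over the at most $n$ pairs in $\calS^c$ matches the paper's count.
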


 With the above results at hand, the main result is easily deduced.
 
 \begin{proof}[Proof of Theorem \ref{th:mainResult}]
 	We want to apply Lemma \ref{lem:DetCond}. Lemma \ref{lem:RIP} together with assumptions $(a)$ to $(d)$ secure that with a failure probability smaller than $\epsilon$, \eqref{eq:RIP} holds, i.e. that $\delta \leq \frac{1}{4}$. \eqref{eq:RIP} also makes Lemma \ref{lem:OffDiagonal} appliciable, which proves that $\beta \leq \tfrac{5}{4}$ with a probability of failure smaller than $\epsilon$. It also makes the Lemma \ref{lem:dualCert} about the dual certificate appliciable, which implies that $\eta=0$, $\tau \leq \tfrac{2\sqrt{5}}{3}$ and $\theta \leq \tfrac{1}{2}$ with a probability of failure smaller than $\epsilon$. 
 	
 	All in all, $\rho = \theta + \tfrac{\eta \beta}{1-\delta} \leq \frac{1}{2}<1$ with a probability larger than $1-3\epsilon$, which is what was to be proven. The corresponding bounds on $C_1$, $C_2$ and $C_3$ are
 	\begin{align*}
 		C_1 = \tfrac{32}{3}, \quad C_2 = \tfrac{4}{\sqrt{5}}, \quad C_3 = \tfrac{96\sqrt{5}}{9}.
 	\end{align*}
 \end{proof}

\section{Proofs} \label{sec:Proofs}

In this section, we present all of the technical details omitted above.

\subsection{Lemma \ref{lem:DetCond}}
Let us begin by performing a (relatively straight-forward) calculation of the subdifferential of $\norm{\cdot}_{1,2}$.
\begin{lem} \label{lem:subDiff12}
	Let $\calZ_0 \in \bigoplus_{i \in [r]} \C^{k_i,n_i}$ be supported on the set $\calS$. Then the subdifferential of $\norm{\cdot}_{1,2}$ at $\calZ_0$, i.e. the set of $\xi \in \bigoplus_{i \in [r]} \C^{k_i,n_i}$ with the property
	\begin{align*}
	 \forall \calH \in \bigoplus_{i \in [r]} \C^{k_i,n_i} : \ \norm{\calZ + \calH}_{1,2} \geq \norm{\calZ}_{1,2} + \text{\emph{Re}}(\sprod{\calH, \xi}),
	\end{align*}
	is given by the direct sum of the individual sub-differentials $\partial_{Z_i}(\norm{\cdot}_{1,2})$, $i=1 , \dots, r$, where $\partial_{Z_i}(\norm{\cdot}_{1,2})$  is given by
	\begin{align} \label{eq:subDiffOneMatrix}
		 \set{ V \in \C^{k_i,n_i} \vert V(j) = \tfrac{Z_i(j)}{\norm{Z_i(j)}_2}, j \in S_i , \norm{V(j)}_2 \leq 1, j \notin S_i}.
	\end{align}
\end{lem}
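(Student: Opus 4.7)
The plan is to reduce the computation of the subdifferential to well-known facts about the subdifferential of the $\ell_2$-norm on a single space, by exploiting that $\norm{\cdot}_{1,2}$ is \emph{separable} in a very strong sense. Concretely, by definition,
\begin{align*}
\norm{\calZ}_{1,2} = \sum_{i \in [r]} \norm{Z_i}_{1,2} = \sum_{i \in [r]} \sum_{j \in [n_i]} \norm{Z_i(j)}_2,
\end{align*}
so the functional is a sum of terms each depending on a single column $Z_i(j)$ of a single block. A standard fact from convex analysis says that if $f(x_1, \dots, x_N) = \sum_\alpha g_\alpha(x_\alpha)$ with each $g_\alpha$ convex and depending on only one coordinate block, then the subdifferential of $f$ is the direct sum of the subdifferentials of the $g_\alpha$. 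First I would invoke this twice: once to split the subdifferential over the index $i$ (giving the claimed direct sum statement in the lemma), and once more to split $\partial(\norm{\cdot}_{1,2})$ at $Z_i$ column-wise into a sum of subdifferentials of the maps $M \mapsto \norm{M(j)}_2$.

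Once the problem is reduced to a single column, what remains is to recall the subdifferential of the Euclidean norm on $\C^{k_i}$, regarded as a real inner product space under $\langle u,v\rangle_\R := \mathrm{Re}(\sprod{u,v})$ as is consistent with the definition of subdifferential given in the lemma. There the standard computation yields
\begin{align*}
\partial \norm{\cdot}_2(z) = \begin{cases} \{z/\norm{z}_2\} & \text{if } z \neq 0, \\ \{v \in \C^{k_i} : \norm{v}_2 \leq 1\} & \text{if } z = 0.\end{cases}
\end{align*}
The nonzero case follows from differentiability of $\norm{\cdot}_2$ away from the origin, and the zero case from the fact that the subdifferential of any norm at $0$ equals the closed unit ball of the dual norm, the $\ell_2$-norm being self-dual. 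For columns $j \in S_i$ we are in the first case and for $j \notin S_i$ we are in the second; assembling these column-wise descriptions gives exactly the set \eqref{eq:subDiffOneMatrix}.

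I do not expect a genuine obstacle here; the one point that requires a small amount of care is that we are working over $\C$ and the subgradient inequality is phrased with the real part of the Hermitian inner product. Viewing $\C^{k_i} \cong \R^{2k_i}$ with inner product $\mathrm{Re}\sprod{\cdot,\cdot}$ makes the $\ell_2$-norm the usual Euclidean norm on $\R^{2k_i}$, and the separability argument and the single-column subgradient computations apply verbatim. The separability lemma itself is an immediate consequence of writing the subgradient inequality $\norm{\calZ+\calH}_{1,2} - \norm{\calZ}_{1,2} \geq \mathrm{Re}\sprod{\calH,\xi}$ for test perturbations $\calH$ supported in a single block and a single column and observing that equality of the sum forces the componentwise inequalities to hold with the corresponding components of $\xi$.
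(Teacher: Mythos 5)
Your proposal is correct and follows essentially the same route as the paper: split the subdifferential block-wise over $i$ and then column-wise over $j$, and identify the per-column subdifferential of $\norm{\cdot}_2$ (the normalized column for $j \in S_i$, the closed unit ball for $j \notin S_i$), with the complex case handled via $\mathrm{Re}\sprod{\cdot,\cdot}$ exactly as you describe. The only difference is one of packaging: where you cite the standard separability and dual-ball facts, the paper re-derives the per-column formula by hand, testing with perturbations supported on a single column and, for $j \in S_i$, with the extra perturbation $\tau u_j$ and $\tau \to 0$ to kill the component of $V(j)$ orthogonal to $Z_i(j)$.
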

\begin{proof}
	By testing with $\calH$'s with only one $H_i \neq 0$, we see that $\partial_{\calZ_0}\norm{\cdot}_{1,2}$ has the claimed direct sum structure. To calculate $\partial_{Z_i}(\norm{\cdot}_{1,2})$, we need to characterize the matrices $V$ with
	\begin{align} \label{eq:subDiffProp}
		\sum_{j \in [n_i]} \norm{Z_i(j) + H(j)}_2 \geq \sum_{j \in S_i} \norm{Z_i(j)}_2 + \text{Re}(\sprod{V(j), H(j)}) + \sum_{j \notin S_i} \text{ Re}(\sprod{V(j),H(j)})
\end{align}	
for all $H \in \C^{k_i,n_i}$. First, it is easy to see that a matrix tuple in \eqref{eq:subDiffOneMatrix} has this property. To see that $\partial_{Z_i} \norm{\cdot}_{1,2}$ is contained in the set \eqref{eq:subDiffOneMatrix}, begin by testing \eqref{eq:subDiffProp} with arbitrary matrices $H$ supported on single columns with indices in $S_i^c$. The resulting inequality implies that the corresponding columns have norm at most $1$. Similarly for $j \in S_i$, by testing with the matrix with $j$:th column $\pm Z_i(j)$ and $0$ else, we see that $\text{Re}(\sprod{Z_i(j), V(j)}) =1$, i.e. $V(j) = \tfrac{Z_i(j)}{\norm{Z_i(j)}_2} + u_j$ with $\text{Re}(u_j, Z_i(j))=0$. To see that $u_j=0$, test with the matrix having $j$:th column  $\tau u_j$ for $\tau>0$ and zero else. Since $\norm{v+w}_2^2 = \norm{v}_2^2 + \norm{w}_2^2 + 2 \text{Re}(\sprod{v,w})$, this implies
\begin{align*}
	\sqrt{\norm{Z_i(j)}_2^2+ \tau^2 \norm{u_j}_2^2} \geq \norm{Z_i(j)}_2 + \tau \norm{u_j}_2^2 \Rightarrow \norm{u_j}_2^2 \leq \frac{ \sqrt{\norm{Z_i(j)}_2^2+ \tau^2 \norm{u_j}_2^2}- \norm{Z_i(j)}_2}{\tau} , \tau >0.
\end{align*} 
By letting $\tau \to 0$, we obtain $\norm{u_j}_2=0$.
\end{proof}

We can now prove Lemma \ref{lem:DetCond}, using the same ideas as in the proof of its $\ell_1$-counterpart (Foucart and Rauhut 2013 \cite[Th. 4.33]{MathIntroToCS}).

\begin{proof}
	Let us denote $\calH= \calZ^*-\calZ_0$.
	Then we have due to the triangle inequality and Lemma \ref{lem:subDiff12} 
	\begin{align*}
		\norm{\calZ^*}_{1,2} = \norm{\calZ_0 + \calH}_{1,2} \geq \norm{\calP_\calS \calZ_0 + \calH}_{1,2}- \norm{\calP_{\calS^c}\calZ_{0}} \geq \norm{\calP_\calS \calZ_0} + \text{Re}\left(\sprod{\calP_\calS \widehat{\calZ}_0, \calH_\calS}\right) + \sprod{\xi_{\calS^c}, \calH_\calS} - \norm{\calP_{\calS^c}\calZ_{0}} 
	\end{align*}
	for every $\xi$ with $\norm{\xi}_{\infty,2} \leq 1$.  Now since $\calZ_0$ obeys the constraint of $\calP_{1,2}^\sigma$, there must be $\norm{\calZ_0}_{1,2} \geq\norm{\calZ^*}_{1,2}$. Using this, the above inequality, and choosing $\xi$ appropriately, we obtain
	\begin{align} \label{eq:HoffSest}
		\norm{\calH_{\calS^c}}_{1,2} \leq \norm{\calZ_0}_{1,2} - \norm{\calP_\calS \calZ_0}_{1,2} + \norm{\calP_{\calS^c} \calZ_0}_{1,2} + \abss{\sprod{\calP_\calS \widehat{\calZ}_0,\calH_\calS}} \leq 2\norm{\calP_{\calS^c} \calZ_0}_{1,2} + \abss{\sprod{\calP\widehat{\calZ}_0,\calH_\calS}}.
	\end{align}
	Due to the first property of $\calY$, we have
	\begin{align} \label{eq:ScalProdEst}
		\abss{\sprod{\calP_\calS \widehat{\calZ}_0, \calH_\calS}} \leq \abss{\sprod{\calP_S(\widehat{\calZ}_0-\calY), \calH_\calS}} + \abss{\sprod{\calY_\calS, \calH_\calS}} \leq \eta \norm{\calH_\calS}_F + \abss{\sprod{\calY, \calH}} + \abss{\sprod{\calY_{\calS^c},\calH_{\calS^c}} }.
	\end{align}
	Due to $\norm{\calP_{\calS} \calA^*\calA\calP_\calS - \calP_\calS}_{F\to F} \leq \delta$, we furthermore have
	\begin{align} \label{eq:ineqDeterm}
		\norm{\calH_\calS}_F \leq \frac{1}{1-\delta} \norm{\calP_S \calA^* \calA \calP_\calS \calH}_F \leq \frac{1}{1-\delta}\norm{\calP_S \calA^* \calA \calH}_F + \frac{1}{1-\delta}\norm{\calP_S \calA^* \calA \calP_{\calS^c}\calH}_F.
	\end{align}
	Now we estimate both of these two terms separately, starting with the second one. We have, due to
	\begin{align*}
		\calA(\calH_{\calS^c}) = \sum_{(i,j)\in \calS^c} \calA^{(i,j)}(H_i(j)),
	\end{align*}
	that
	\begin{align} \label{eq:PsAAHSc}
		\norm{\calP_S \calA^* \calA (\calH_{\calS^c})}_F \leq  \sum_{(i,j)\in \calS^c} \norm{\calP_S \calA^*\calA^{(i,j)}(H_i(j))}_F \leq \beta \sum_{(i,j)\in \calS^c} \norm{H_i(j)}_2 \leq \beta \norm{\calH_{\calS^c}}_{1,2},
	\end{align}
 where we in the second to last step used that $\norm{\calP_S \calA^*\calA^i_j}_{2\to F} \leq \beta$ for all $(i,j) \in \calS^c$. Now for the first term in \eqref{eq:ineqDeterm}. Since $\norm{\calP_\calS \calA^*}_{2\to F}^2 = \norm{\calP_\calS \calA^* \calA \calP_S}_{F \to F} \leq 1 +\delta$, we have 
 \begin{align} \label{eq:PsAHSc}
 	\norm{\calP_\calS \calA^*\calA(\calH)}_F \leq \sqrt{1+\delta} \norm{\calA (\calH)}_2 \leq 2 \sqrt{1+\delta}\sigma,
 \end{align}
 where the last estimate follows from the constraint of $\calP_{1,2}^\sigma$:
 \begin{align*}
 	\norm{\calA(\calH)}_2 \leq \norm{\calA(\calZ_0)-y}_2 + \norm{y-\calA(\calZ^*)}_2 \leq 2 \sigma.
 \end{align*}
Combining \eqref{eq:ineqDeterm} with \eqref{eq:PsAAHSc} and \eqref{eq:PsAHSc},we obtain
\begin{align} \label{eq:ineqDeterm2}
\norm{\calH_\calS}_F  \leq \frac{1}{1-\delta}\left(2 \sqrt{1+\delta} \sigma + \beta \norm{\calH_{\calS^c}}_{1,2}\right)
\end{align}

	We may furthermore deduce from the fact that $\calY = \calA^*\upsilon$ and $\norm{\upsilon}_2 \leq 2 \tau \sqrt{s}$
\begin{align} \label{eq:scalProd2}
	\abss{\sprod{\calY, \calH}} = \abss{\sprod{\upsilon, \calA(\calH)}} \leq \norm{\upsilon}_2 \norm{\calA(\calH)}_2 \leq 2 \tau \sqrt{s} \sigma
\end{align}
Finally, due to the second assumption on $\calY$, $\abss{\sprod{\calY_{\calS^c}, \calH_{\calS^c}}} \leq \norm{\calY_{\calS^c}}_{\infty,2} \norm{\calH_{\calS^c}}_{1,2} \leq \theta \norm{\calH_{\calS^c}}_{1,2}
$. Putting this estimate together with \eqref{eq:ineqDeterm2} and \eqref{eq:scalProd2} into \eqref{eq:ScalProdEst} yields
\begin{align*}
	\abss{\sprod{\calP_\calS \widehat{\calZ}_0, \calH_\calS}} \leq \frac{2\eta}{1-\delta} \sqrt{1+\delta}\sigma + \frac{\eta\beta}{1-\delta}\norm{\calH_{\calS^c}}_{1,2} + 2 \tau \sigma \sqrt{s} + \theta \norm{\calH_{\calS^c}}_{1,2}
\end{align*}
Which, put into \eqref{eq:HoffSest} and identifying the expressions defined in the statement of the theorem, reads
\begin{align*}
	\norm{\calH_{\calS^c}}_{1,2} \leq  2\norm{\calP_{\calS^c} \calZ_0}_{1,2} +  2\eta \mu\sigma + 2 \tau \sigma \sqrt{s} + \rho \norm{\calH_{\calS^c}}_{1,2}.
\end{align*}
Which together with \eqref{eq:ineqDeterm2} implies
\begin{align*}
	\norm{\calH}_F &\leq \norm{\calH_\calS}_F + \norm{\calH_{\calS^c}}_F \leq 2 \mu \sigma + \left(1 + \frac{\beta}{1-\delta}\right)\norm{\calH_{\calS^c}}_{1,2} \\
	&\leq 2 \mu \sigma + \left(1 + \frac{\beta}{1-\delta}\right)\cdot \frac{1}{1-\rho} \cdot \left(2\norm{\calP_{\calS^c} \calZ_0}_{1,2} +  2\eta \mu\sigma + 2 \tau \sigma \sqrt{s} \right), 
\end{align*}
which is exactly what we aimed to prove.
	\end{proof}
	
	\subsection{A Technical Tool from Random Matrix Theory.}
	Just as in the paper (Ling and Strohmer \cite{ling2015sparseDeconv}), the main technical tool is a version of the Matrix Berstein inequality.  We use the one from  (Koltchinskii 2013 \cite{ koltchinskii2013}). It makes use of the $1$-Orlicz-Norm of a random matrix:
	\begin{align*}
		\norm{\Psi}_{\psi_1} = \inf_{u \geq 0} \erw{\exp(\norm{\Psi/u}_{2\to 2})} \leq 2.
	\end{align*}
	It is possible to prove (Vershynin 2012 \cite[Lem 5.5]{RandomMatrices2012Vershinyn}) that the Orlicz norm is equivalent to 
	\begin{align*}
		\norm{\Psi}_{\psi_1}:= \sup_{p\geq 1} p^{-1} \erw{\norm{\Psi}_{2 \to 2}^p}^{-\tfrac{1}{p}}.
	\end{align*}
	This makes it clear that it is reasonable to define the $\psi_q$-norm of a random variable $X$ for $q \geq 1$ through
	\begin{align*}
		\norm{X}_{\psi_q} \simeq \sup_{p\geq 1} p^{-\tfrac{1}{q}} \erw{\abs{X}^p}^{-\tfrac{1}{p}}.
	\end{align*}
	We then have $\norm{\abs{X}^q}_{\psi_1} \leq q\norm{X}_{\psi_q}^q$, since
	\begin{align*}
		\norm{\abs{X}^q}_{\psi_1} = \sup_{p\geq 1} p^{-1} \erw{\abs{X}^{qp}}^{-\tfrac{1}{p}} =  \sup_{p \geq 1} \left( (pq)^{-\tfrac{1}{q}}q^{\tfrac{1}{q}} \erw{\abs{X}^{qp}}^{-\tfrac{1}{pq}}\right)^q =\sup_{p \geq q} \left( p^{-\tfrac{1}{q}}q^{\tfrac{1}{q}} \erw{\abs{X}^{p}}^{-\tfrac{1}{p}}\right)^ q \leq q\norm{X}_{\psi_q}^q. 
\end{align*}	
It is also clear that if $X$ and $Y$ are independent, we have 
\begin{align}
	\norm{X \cdot Y}_{\psi_1} = \sup_{p \geq 1} p^{-1}\erw{ \abs{X \cdot Y}^p}^{-\tfrac{1}{p}} = \sup_{p \geq 1} p^{-\tfrac{1}{2}}\erw{ \abs{X}^p}^{-\tfrac{1}{p}} \cdot \sup_{p \geq 1} p^{-\tfrac{1}{2}}\erw{ \abs{Y}^p}^{-\tfrac{1}{p}} =  \norm{X}_{\psi_2} \norm{Y}_{\psi_2} \label{eq:OrliczIndep}
\end{align}

 For a vector $g =(g(1), \dots g(d)) \in \C^d$ with independent Gaussian entries, with variances $\mathfrak{s}_i^2$, we have (see (Vershynin 2012 \cite[Example 5.8.1, Lemma 5.9]{RandomMatrices2012Vershinyn} ))
\begin{align}
		\norm{g}_{\psi_2}^2 \leqsim \sum_{i\in [d]} \norm{g(i)}_{\psi_2}^2  \leqsim\sum_{i\in [d]} \mathfrak{s}_i^2 . \label{eq:GaussOrlicz}
\end{align} 

\begin{rem}
Note that  in this paper, $\norm{g}_{\psi_2}$ is for a random vector $g \in \C^d$, despite of the notional similarity, \emph{not} the subgaussian norm defined in (Vershynin 2012 \cite{RandomMatrices2012Vershinyn}). Instead, we view it as a linear map $\C^d \to \R$ and use the definition from above, i.e. $\norm{g}_{\psi_q} = \norm{\norm{g}_2}_{\psi_q}$.
\end{rem}	
	With this terminology at hand, we may formulate the theorem. To keep things simple, we omit some details, in particular regarding the values of the appearing constants. These can be found in the referenced source.
	\begin{theo} \label{th:SwissArmyKnife} (Simplified version of Koltchinskii 2013 \cite[Theorem 4, Corollary 1]{koltchinskii2013})\footnote{In an earlier version of this article, a similar bound, which is only true for \emph{identically distributed} matrices, was erroneously used.} Consider a finite sequence $(\Psi_\ell)_{\ell \in [q]}$ of independent, centered and self-adjoint random matrices with dimension $M \times M$, with $R:= \max_{1 \leq 1\leq q}\norm{\Psi_\ell}_{\psi_1}<\infty$. Let ${\bf S}$ denote the matrix
	\begin{align*}
		{\bf S} = \sum_{\ell=1}^q \Psi_\ell
	\end{align*}
	and define
	\begin{align*}
		\sigma^2 := \max \left(\norm{\sum_{\ell \in [q]} \erw{\Psi_\ell \Psi_\ell^*}}_{2 \to 2} , \norm{\sum_{\ell \in [q]} \erw{\Psi_\ell^* \Psi_\ell}}_{2 \to 2}\right).
	\end{align*}
	Then we can bound
	\begin{align*}
	\prb{\norm{{\bf S}}_{2 \to 2} \geq t } \leq P_{M, \sigma^2, R, q}(t),
	\end{align*}
	where $P_{M,\sigma^2, R, q}(t)$ has the following property:
	 There exist constants $C_0$, $C_1$ and $C_2$ such that for every $t$ satisfying $0< t R \log\left(1+ \tfrac{C_2R^2q}{\sigma^2}\right)< C_1 \sigma^2$,
	we have
	\begin{align*}
		 P_{M, \sigma^2, R, q}(t) \leq 2 M \exp\left( - \frac{1}{C_0}\frac{t^2}{\sigma^2 +  R\log\left(1+ \tfrac{C_2R^2q}{\sigma^2}\right)t}\right).
	\end{align*}
	For $t$ with $ tR\log\left(1+ \tfrac{C_2R^2q}{\sigma^2}\right) \geq \sigma^2$, we instead have
	\begin{align*}
		P_{M, \sigma^2, R, q}(t) \leq 2 M \exp\left( - \frac{1}{C_0}\frac{t}{ R\log\left(1+ \tfrac{C_2R^2q}{\sigma^2}\right)}\right)
\end{align*}	 
	\end{theo}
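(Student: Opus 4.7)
The plan is to follow the classical matrix Laplace transform method (à la Ahlswede--Winter / Tropp), combined with a truncation argument to handle the unboundedness of the summands, which are controlled only through their $\psi_1$-Orlicz norm. The starting point is the master tail bound
\begin{align*}
\prb{\normm{{\bf S}}_{2 \to 2} \geq t} \leq 2M \inf_{\theta > 0} \exp\left( -\theta t + \sum_{\ell \in [q]} \psi_\ell(\theta)\right),
\end{align*}
where $\psi_\ell(\theta)$ is a scalar upper bound on the matrix cumulant generating function $\log \erw{\exp(\theta \Psi_\ell)}$ (in Loewner order), obtained via Markov's inequality combined with Lieb's concavity theorem. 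The problem is thus reduced to bounding these cumulants.

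Since the $\Psi_\ell$ are unbounded, the cumulants cannot be expanded directly into a classical Bernstein series. I would therefore truncate at a level $U > 0$ to be chosen later: write $\Psi_\ell = \Psi_\ell^{(U)} + R_\ell^{(U)}$ with $\Psi_\ell^{(U)} = \Psi_\ell \ind_{\norm{\Psi_\ell}_{2 \to 2}\leq U}$. For the recentered truncated contributions $\Psi_\ell^{(U)} - \erw{\Psi_\ell^{(U)}}$, which are bounded by $2U$ in operator norm, the standard bounded matrix Bernstein inequality applies; since truncation can only decrease variance, $\normm{\sum_\ell \erw{\Psi_\ell^{(U)}(\Psi_\ell^{(U)})^*}}_{2 \to 2} \leq \sigma^2$, and one obtains a tail of the form $2M \exp\left( -\tfrac{t^2/2}{\sigma^2 + CUt}\right)$. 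The unbounded remainder is controlled by Markov applied to the $\psi_1$-norm, giving $\prb{\exists \ell : \norm{\Psi_\ell}_{2 \to 2} > U} \lesssim q \exp(-U/(CR))$, and the recentering bias $\normm{\sum_\ell \erw{\Psi_\ell^{(U)}}}_{2 \to 2}$ is of order $qR \exp(-U/(CR))$ and can be absorbed into the main exponent.

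The heart of the argument is the choice of $U$: taking $U \simeq R \log\left( 1 + \tfrac{C_2 R^2 q}{\sigma^2}\right)$ balances the sub-exponential tail of the remainder against the growth of the Bernstein exponent in $U$, and it is exactly this choice that produces the logarithmic factor appearing in the denominator of the final bound. The two stated regimes then correspond to which term dominates in $\sigma^2 + URt$: when $\sigma^2 \geq Rt \log(1 + C_2 R^2 q/\sigma^2)$ one retains the sub-Gaussian bound $\exp(-t^2/(C_0 \sigma^2))$, and otherwise the sub-exponential bound $\exp(-t/(C_0 R \log(\cdots)))$ takes over.

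The main obstacle is carrying out the truncation matrix-wise without incurring a spurious $\log q$ or ambient-dimension factor. Specifically, one must verify that truncation does not inflate the variance proxy beyond $\sigma^2$ up to constants, and that the recentering bias is uniformly controlled through the single $\psi_1$-assumption $R = \max_\ell \norm{\Psi_\ell}_{\psi_1}$, rather than through individual Orlicz norms of the scalar random variables $\norm{\Psi_\ell}_{2 \to 2}$ that might be much larger after accumulating over $\ell$. This is the technical core of Koltchinskii's argument, and it parallels (while strictly refining) the familiar truncation proof of Bernstein's inequality for scalar $\psi_1$-variables.
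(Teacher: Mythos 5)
First, a point of comparison: the paper does not prove Theorem \ref{th:SwissArmyKnife} at all --- it is imported, in simplified form, directly from Koltchinskii's paper, so there is no internal proof to measure your argument against. Your sketch does follow the right general template (matrix Laplace transform via Lieb's concavity, plus truncation of summands controlled only in $\psi_1$), which is the family of arguments from which this bound descends.

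However, the sketch has a genuine gap exactly at the step you identify as its heart. You truncate at $U \simeq R\log\bigl(1+\tfrac{C_2R^2q}{\sigma^2}\bigr)$ and propose to dispose of the unbounded remainder via the union bound $\prb{\exists \ell:\ \norm{\Psi_\ell}_{2\to 2}>U}\leqsim q\exp(-U/(CR))$. With this choice of $U$ (and $C=1$, which is what the paper's definition of $\norm{\cdot}_{\psi_1}$ gives), that quantity equals $q\sigma^2/(\sigma^2+C_2R^2q)\approx\min\bigl(q,\ \sigma^2/(C_2R^2)\bigr)$, which is \emph{not} small --- in the regime $\sigma^2\gg R^2$, where the refined logarithm is actually an improvement over $\log q$, it is enormous. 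Forcing the union bound down to $\epsilon$ requires $U\geqsim R\log(q/\epsilon)$, at which point the whole argument only yields the cruder Adamczak-type inequality with a $\log(q/\epsilon)$ factor in place of $\log\bigl(1+\tfrac{C_2R^2q}{\sigma^2}\bigr)$. The recentering bias suffers the same problem: $\normm{\sum_\ell\erw{\Psi_\ell^{(U)}}}_{2\to 2}\leqsim q(U+R)e^{-U/R}\approx U\sigma^2/(C_2R^2)$, and requiring this to be $\leq t/2$ while $t$ also satisfies $tRU<C_1\sigma^2$ forces $U\leqsim R$, i.e.\ the admissible range of $t$ can be empty. The point of Koltchinskii's refinement is precisely that the tails above $U$ are \emph{not} union-bounded away but absorbed into the moment generating function itself (e.g.\ via H\"older on the event $\{\norm{\Psi_\ell}_{2\to 2}>U\}$, contributing an additive $O(e^{-U/(2R)})$ per summand to the cumulant, which is then balanced against $\theta t$ in the exponent). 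You gesture at this difficulty in your closing paragraph but do not resolve it, so what your sketch actually proves is the weaker $\log q$ version --- and note that the gain over $\log q$ is exactly what the paper exploits to improve on assumption $(d)_*$ (cf.\ Remark \ref{rem:12Better}) in the applications in Lemmas \ref{lem:RIP}, \ref{lem:OffDiagonal} and \ref{lem:dualCert}.
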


As was pointed out in (Tropp 2012 \cite{Tropp2012userfriendly}), a theorem like the previous one immediately implies a corresponding statement for non-square (and also square but non-self-adjoint) matrices. Let us state and prove this assertion.
\begin{cor} \label{cor:SwissNonRectangular}
	Consider a sequence $(\Psi_\ell)_{\ell \in [q]}$ of independent centered random matrices with dimension $M \times N$. Adopting the notation of the previous theorem, we then have 
	\begin{align*}
	\prb{\norm{{\bf S}}_{2 \to 2} \geq t } \leq  P_{M+N, \sigma^2, R, q}(t).
	\end{align*}

\end{cor}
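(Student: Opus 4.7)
My plan is to reduce the non-self-adjoint case to the self-adjoint case of Theorem~\ref{th:SwissArmyKnife} via the standard Hermitian dilation trick (sometimes called the Jordan--Wielandt construction). For each $\ell \in [q]$, I would form
\begin{align*}
	\widetilde{\Psi}_\ell := \begin{pmatrix} 0 & \Psi_\ell \\ \Psi_\ell^* & 0 \end{pmatrix} \in \C^{(M+N)\times (M+N)},
\end{align*}
which is self-adjoint. The $\widetilde{\Psi}_\ell$ inherit independence and centeredness from the $\Psi_\ell$, so the sequence $(\widetilde{\Psi}_\ell)_{\ell \in [q]}$ satisfies the hypotheses of Theorem~\ref{th:SwissArmyKnife} with ambient dimension $M+N$.

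The key observation is that the dilation is an isometry with respect to the spectral norm: $\norm{\widetilde{\Psi}_\ell}_{2\to 2} = \norm{\Psi_\ell}_{2 \to 2}$ pointwise. Hence it is immediate from the definition of $\norm{\cdot}_{\psi_1}$ that $\norm{\widetilde{\Psi}_\ell}_{\psi_1} = \norm{\Psi_\ell}_{\psi_1}$, so the maximal Orlicz parameter $R$ in Theorem~\ref{th:SwissArmyKnife} is unchanged. Moreover, $\widetilde{\mathbf{S}} := \sum_\ell \widetilde{\Psi}_\ell$ is itself the Hermitian dilation of $\mathbf{S}$, so $\norm{\widetilde{\mathbf{S}}}_{2\to 2} = \norm{\mathbf{S}}_{2\to 2}$, and the tail probability we wish to control is identical.

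Next I would match the variance parameter. Since $\widetilde{\Psi}_\ell$ is self-adjoint, one computes
\begin{align*}
	\widetilde{\Psi}_\ell \widetilde{\Psi}_\ell^* = \widetilde{\Psi}_\ell^* \widetilde{\Psi}_\ell = \widetilde{\Psi}_\ell^2 = \begin{pmatrix} \Psi_\ell \Psi_\ell^* & 0 \\ 0 & \Psi_\ell^* \Psi_\ell\end{pmatrix}.
\end{align*}
Taking expectations and summing, the resulting $(M+N)\times (M+N)$ matrix is block-diagonal, so its spectral norm equals the maximum of the spectral norms of the two diagonal blocks. That maximum is exactly $\sigma^2$ as defined in the hypothesis of the corollary. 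So the variance parameter for the dilated sequence coincides with the $\sigma^2$ appearing in the statement we wish to prove.

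Applying Theorem~\ref{th:SwissArmyKnife} to $(\widetilde{\Psi}_\ell)$ with dimension $M+N$, parameters $R$ and $\sigma^2$ as above, and the same $t$ yields
\begin{align*}
	\prb{\norm{\mathbf{S}}_{2\to 2}\geq t} = \prb{\norm{\widetilde{\mathbf{S}}}_{2\to 2}\geq t} \leq P_{M+N,\sigma^2,R,q}(t),
\end{align*}
which is exactly the bound claimed in the corollary. No step here is really an ``obstacle''—the argument is a routine bookkeeping reduction—but the one place where care is needed is to verify that the Orlicz-norm and the variance parameter are preserved under dilation, so that the constants in $P_{M+N,\sigma^2,R,q}$ can be read off directly from the hypotheses on the original $\Psi_\ell$ rather than incurring any hidden factor of $2$ or similar.
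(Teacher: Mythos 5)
Your proof is correct and follows essentially the same route as the paper, namely Tropp's Hermitian dilation trick reducing to the self-adjoint case of Theorem~\ref{th:SwissArmyKnife}; your explicit block computation of $\widetilde{\Psi}_\ell^2$ just fills in the detail that the paper's sketch asserts when it says the dilated sequence has ``the same $R$ and $\sigma^2$-parameters.'' Nothing further is needed.
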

\begin{proof}[Sketch of Proof]
	(The idea is from (Tropp 2012 \cite{Tropp2012userfriendly})). For a matrix $M \in \C^{M, N}$, define the \emph{dialation} $\scrS(M) \in \C^{M+N, M+N}$ through
	\begin{align*}
		\scrS(M) = \begin{bmatrix}
		 0 & M\\
		 M^* & 0
		\end{bmatrix}.
	\end{align*}
	Then $\norm{\scrS(M)}_{2 \to 2} = \norm{M}_{2 \to 2}$. Consequently, $\scrS(\Psi_\ell)$ is a sequence of independent and centered self-adjoint $(M+N)\times (M+N)$-matrices with the same $R$ and $\sigma^2$-parameters as $(\Psi_\ell)$. Hence, the statement follows immediately from the previous theorem.
\end{proof}

We will often use the above theorem to derive bounds on the number of measurements needed for the probability that $\norm{{\bf S}}_{2\to 2}$ to be small for some random matrix ${\bf S}$. When doing this, the following observation be very convenient: 
Let $t>0$. Suppose that we have secured a bound of the form 
\begin{align*}
	\prob{Event} \leq QP_{M,\sigma^2,R,q}(t)
\end{align*}
and that
\begin{align}
	\sigma^2 +  Rt \log\left(1+ \tfrac{C_2R^2q}{\sigma^2}\right) \leqsim t^2\log\left(\tfrac{QM}{\epsilon}\right)^{-1}, \quad R \log\left(1+ \tfrac{C_2R^2q}{\sigma^2}\right) \leqsim t\log\left(\tfrac{QM}{\epsilon}\right)^{-1} \label{eq:ProbAsymptotics}
\end{align}
By applying the bounds provided in Theorem \ref{th:SwissArmyKnife} for large and small $t$ separately, we can then conclude that $\prob{Event}<\epsilon$.

\subsection{Bounding the Parameters $\beta$ and $\delta$.}
With the two results presented in the last section in our toolbox, it is possible to bound the parameters $\beta$ and $\delta$ with high probability. 

For a start, note that there is no fundamental difference between dealing with linear maps on the space of matrix tuples equipped with the Frobenius norm and matrices defined on $\C^K$, for an appropriate $K$, equipped with the $\ell_2$-norm. We will from now on never comment on this subtlety and instead apply Theorem \ref{th:SwissArmyKnife} and Corollary \ref{cor:SwissNonRectangular} without explicitely re-interpreting the linear maps on matrix spaces to linear maps on an high-dimensional $\C^K$.

Let us now 
	calculate $\calA^*$. We have for $\calZ \in \bigoplus_{i \in [r]} \C^{k_i,n_i}$ and $p \in \C^q$
\begin{align*}
	\sprod{\calA(\calZ),p} = \sum_{\ell \in [q]} \sum_{i \in [r]} \overline{\sprod{b^i_\ell, Z_i a^i_\ell}} p_\ell = \sum_{i \in [r]} \sprod{Z_i,p_\ell b^i_\ell(a^i_\ell)^*} = \sprod{\calZ, \calA^* p},
\end{align*}
and hence
\begin{align*}
	(\calA^*p)_i = \sum_{\ell \in [q]} p_\ell b^i_\ell(a^i_\ell)^*.
\end{align*}
Consequently,
\begin{align*}
	(\calA^* \calA (\calZ))_i =  \sum_{\ell \in [q]} \sum_{\kappa \in [r]} \sprod{b^\kappa_\ell, Z_\kappa a^\kappa_\ell} b^i_\ell (a^i_\ell)^* = \sum_{\ell \in [q]}  \sum_{\kappa \in [r]} b^i_\ell(b^\kappa_\ell)^* Z_\kappa a^\kappa_\ell(a^i_\ell)^*
\end{align*}

We can now provide the proofs, starting with Lemma \ref{lem:RIP}.
\begin{proof}[Proof of Lemma \ref{lem:RIP}]
	We have
	\begin{align*}
		\left(\calP_\calS \calA^*\calA\calP_\calS(\calZ)\right)_i = \sum_{\ell \in [q]} \sum_{\kappa \in [r]} \sprod{b^\kappa_\ell, Z_\kappa (a^\kappa_\ell)_{S_\kappa}} b^i_\ell (a^i_\ell)_{S_i}^*=\sum_{\ell \in [q]} \sum_{\kappa \in [r]}  b^i_\ell (b^\kappa_\ell)^* Z_\kappa \alpha^\kappa_\ell (\alpha^i_\ell)^*,
	\end{align*} 
	where we for $i, \ell$ defined the variables $\alpha^i_\ell = (a^i_\ell)_{S_i}$, which again are independent and Gaussians in their respective spaces, since they are projections of independent Gaussians. Due to further basic properties of Gaussians, we have $\erw{\alpha^\kappa_\ell (\alpha^i_\ell)^*} = \delta_{\kappa i} \id_{S_i}$.  Due to assumption (b), we furthermore have
	\begin{align*}
		\calZ = \left(\sum_{\ell \in [q]} b^i_\ell (b_\ell^i)^* Z_i\right)_{i \in [r]},
	\end{align*}
	and hence
	\begin{align*}
		\left(\calP_\calS \calA^* \calA\calP_\calS - \calP_\calS\right)= \sum_{\ell \in [q]} \Psi_\ell- \erw{\Psi_\ell},
	\end{align*}
	where we defined $\Psi_\ell: \bigoplus_{i \in [r]} \C^{k_i,n_i} \to \bigoplus_{i \in [r]} \C^{k_i,n_i}$ through
	\begin{align*}
		\Psi_\ell(Z)_i = \sum_{\kappa \in [r]} b^i_\ell (b^\kappa_\ell)^* Z_\kappa \alpha^\kappa_\ell (\alpha^i_\ell)^*
	\end{align*}
	The random variables $\Psi_\ell - \erw{\Psi_\ell}$, $\ell =1, \dots, q$, are independent and, of course, centered. In order to apply Theorem \ref{th:SwissArmyKnife}, we need to estimate the $\psi_1$-norms of them. Towards this, let us begin by calculating $\norm{\Psi_\ell(\calZ)}_F$ for a fixed $\calZ$. We have
	\begin{align*}
		\norm{\Psi_\ell(\calZ)}_F^2 &= \sum_{i \in [r]} \big\Vert \sum_{\kappa \in [r]} \sprod{b_\ell^\kappa,Z_\kappa \alpha_\ell^\kappa} b_\ell^i (\alpha_\ell^i)^* \big\Vert_{F}^2 = \sum_{i \in [r]} \norm{b_\ell^i(\alpha^i_\ell)^*}_F^2 \bigg\vert \sum_{k \in [r]} \sprod{b_\ell^\kappa, Z_\kappa \alpha_\ell^\kappa} \bigg\vert^2 \\
		&\leq \sum_{i \in [r]} \norm{b_\ell^i}_2^2\norm{\alpha^i_\ell}_2^2 \left( \sum_{k \in [r]} \norm{b_\ell^\kappa}_2 \norm{ Z_\kappa}_F \norm{\alpha_\ell^\kappa}_2\right)^2 \leq \sum_{i \in [r]} \norm{b_\ell^i}_2^2\norm{\alpha^i_\ell}_2^2\sum_{j \in [r]} \norm{b_\ell^j}_2^2  \norm{\alpha_\ell^j}_2^2 \sum_{\kappa \in [r]}\norm{ Z_\kappa}_F^2
	\end{align*}
	We used Cauchy-Schwarz, $\norm{Av}_2 \leq \norm{A}_F \norm{v}$ and $\norm{uv^*}_F = \norm{u}_2\norm{v}_2$. Hence, $\norm{\Psi_\ell(\calZ)}_F \leq \sum_{i \in [r]} \norm{b_\ell^i}_2^2\norm{\alpha^i_\ell}_2^2 \cdot \norm{\calZ}_F$, and consequently
	\begin{align*}
		\norm{\Psi_\ell}_{F \to F} \leq \sum_{i \in [r]} \norm{b_\ell^i}_2^2\norm{\alpha^i_\ell}_2^2.
	\end{align*}
	This is an expression which obeys
	\begin{align*}
		\norm{\norm{\Psi_\ell}_{F \to F}}_{\psi_1} \leq 2\sum_{i \in [r]} \norm{b_\ell^i}_2^2\norm{\alpha^i_\ell}_{\psi_2}^2 \leqsim \frac{\mu_{+}^2}{q}\sum_{i \in [r]}  k_is_i .
	\end{align*}
	We used $\norm{X^2}_{\psi_1} \leq 2 \norm{X}_{\psi_2}^2$,   \eqref{eq:GaussOrlicz}, and assumption $(c)$. Note that we can use the same (asymptotic) estimate for the $\psi_1$-norm of $\Psi_\ell - \erw{\Psi_\ell}$, as was pointed out in (Vershynin 2012 \cite{RandomMatrices2012Vershinyn}). We have hence managed to bound the $R$-parameter in Theorem \ref{th:SwissArmyKnife}.
	
	Let us move on to the $\sigma^2$-parameter. $\Psi_\ell$, and therefore also $\Psi_\ell - \erw{\Psi_\ell}$, is self-adjoint, since
	\begin{align*}
		\sprod{\Psi_\ell(\calZ), \calY} = \sum_{i \in [r]} \sprod{ \sum_{\kappa \in [r]} b^i_\ell(b^\kappa_\ell)^* Z_\kappa \alpha_\ell^\kappa(\alpha_\ell^i)^*, Y_i} = \sum_{\kappa \in [r]}  \sprod{Z_\kappa,\sum_{i \in [r]} b^\kappa_\ell(b^i_\ell)^*Y_i\alpha_\ell^i(\alpha_\ell^\kappa)^*} = \sprod{\calZ, \Psi_\ell(\calY)}.
	\end{align*}
	Therefore, 
	\begin{align*}
		\erw{(\Psi_\ell - \erw{\Psi_\ell})^*(\Psi_\ell - \erw{\Psi_\ell}) }=\erw{(\Psi_\ell - \erw{\Psi_\ell})^2 }= \erw{\Psi_\ell^2} - \erw{\Psi_\ell}^2.
	\end{align*}
	$\erw{\Psi_\ell}^2$ is given by $\erw{\Psi_\ell}^2(\calZ)_i = \norm{b_\ell^i}_2^2 b_\ell^i (b_\ell^i)^*Z_i$ and
	\begin{align*}
		\Psi_\ell^2(\calZ)_i = \sum_{j \in [r]} b^i_\ell (b^j_\ell)^* \Psi_\ell(\calZ)_j \alpha^j_\ell (\alpha^i_\ell)^* =  \sum_{j \in [r]} \sum_{\kappa \in [r]}  b^i_\ell \norm{b^j_\ell}_2^2 (b_\ell^\kappa)^* Z_\kappa \alpha_\ell^\kappa \norm{\alpha_\ell^j}_2^2 (\alpha^i_\ell)^*.
	\end{align*}
	Lemma \ref{lem:Gauss} (which is yet to be proven) reads
	\begin{align*}
		\erw{\alpha_\ell^\kappa \norm{\alpha_\ell^j}_2^2 (\alpha^i_\ell)^*} = \begin{cases} (s_i +2) \id_{S_i} & \quad i=j=\kappa \\
		s_j \id_{S_i} &\quad i=\kappa \neq j \\
		0  & \quad \text{ else. } \end{cases} 
\end{align*}		
Consequently,
\begin{align*}
	\erw{\Psi_\ell^2(\calZ)_i} = (s_i+2) \norm{b_\ell^i}_2^2 b_\ell^i (b_\ell^i)^*Z_i + \sum_{j \neq i} s_j \norm{b^j_\ell}_2^2 b^i_\ell (b_\ell^i)^*Z_i.
\end{align*}
This implies
\begin{align*}
	\left(\erw{\Psi_\ell^2} -\erw{\Psi_\ell}^2\right)_i = \left((s_i+1) \norm{b_\ell^i}_2^2 + \sum_{j \neq i} s_j \norm{b^j_\ell}_2^2\right) b^i_\ell (b_\ell^i)^* \asymp  \frac{\mu^2}{q} \left(\sum_{j \in [r]} s_j k_j \right) b_\ell^i(b_\ell^i)^*.
\end{align*}
We used assumption $(c)$ at the end. Here, $\mu$ is meant to be understood as $\muplus$ in the upper bound and $\muminus$ in the lower bound. Summing over $\ell \in [q]$ and utilizing assumption $(b)$, we arrive at
\begin{align*}
	\sum_{\ell \in [q]} \erw{\Psi_\ell^2}- \erw{\Psi_\ell}^2  \asymp \frac{\mu^2}{q}\big( \sum_{i \in [r]} s_i k_i \big)
\end{align*}
I.e. $R \leqsim \frac{\muplus^2}{q} \big( \sum_{i \in [r]} s_i k_i \big)$ and $\sigma^2 \asymp \tfrac{\mu^2}{q}\big( \sum_{i \in [r]} s_i k_i \big)$. Towards applying Theorem \ref{th:SwissArmyKnife}, we note that these bounds together with assumption $(d)$ imply that
\begin{align*}
	\sigma^2 + \tfrac{1}{2} R \log\left(1 + \tfrac{C_2R^2q}{\sigma^2}\right) &\leqsim \tfrac{\muplus^2}{q}\big( \sum_{i \in [r]} s_i k_i \big)\left( 1 + \log \left(1+\tfrac{C_2\mu_+^4\sum_{i \in [r]} s_i k_i}{\muminus^2} \right)\right) \leqsim \frac{1}{4}\log\left(\tfrac{\sum_{i\in [r]}s_ik_i}{\epsilon}\right)^{-1} \\
	\tfrac{1}{4}R\log\left(1 + \tfrac{C_2R^2q}{\sigma^2}\right)&\leqsim \tfrac{\muplus^2}{2q} \big( \sum_{i \in [r]} s_i k_i \big) \leqsim \frac{1}{16}\log\left(\tfrac{\sum_{i\in [r]}s_ik_i}{\epsilon}\right)^{-1}.
\end{align*}
We used that $ \sum_{i\in[r]} s_i k_i \leq n k^*$.
We now Theorem \ref{th:SwissArmyKnife} to conclude that
\begin{align*}
	\prb{\normm{ \calP_\calS \calA^*\calA\calP_\calS - \calP_\calS}_{F \to F} > \tfrac{1}{2}} \leq P_{\left(\sum_{i\in [r]} s_ik_i\right), \sigma^2,R,q}\left(\tfrac{1}{2}\right).
\end{align*}
(Note that $\calP_\calS \calA^*\calA\calP_\calS - \calP_\calS$ is defined on the $\left(\sum_{i\in [r]} s_ik_i\right)$-dimensional space $\set{ \calZ \in \bigoplus_{i\in [r]} \calC^{k_i,n_i} \vert \supp \calZ \sse \calS}$). Since we have a bound of the form \eqref{eq:ProbAsymptotics} for $t = \tfrac{1}{4}$, the discussion following Theorem \ref{th:SwissArmyKnife} finishes the proof.
	\end{proof}

It remains to prove the left out lemma.
	
\begin{lem} \label{lem:Gauss}
	Let $\alpha_\ell^\kappa$ be defined as above. We then have
	\begin{align*}
		\erw{\alpha_\ell^\kappa \norm{\alpha_\ell^j}_2^2 (\alpha^i_\ell)^*} = \begin{cases} (s_i +2) \id_{S_i} & \quad i=j=\kappa \\
		s_j \id_{S_i} &\quad i=\kappa \neq j \\
		0  & \quad \text{ else. } \end{cases} 
\end{align*}	
\end{lem}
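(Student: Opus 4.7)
The plan is to split the expectation into cases based on which of the indices $i, j, \kappa$ coincide. The key structural observation is that the full vectors $a^1_\ell,\dots,a^r_\ell$ are independent complex Gaussians, and so their projections $\alpha^1_\ell,\dots,\alpha^r_\ell$ onto the respective supports $S_m$ are independent centered Gaussians with $\erw{\alpha^m_\ell(\alpha^m_\ell)^*} = \id_{S_m}$. Throughout, independence between distinct $\alpha^m_\ell$ lets us factor expectations as soon as the integrand decouples.

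First I would dispose of the case $i \neq \kappa$ (the ``else'' branch). Here I split on the location of $j$. If $j \notin \{i,\kappa\}$, all three vectors are independent and $\erw{\alpha^\kappa_\ell} = 0$ forces the expectation to vanish. If $j = \kappa \neq i$, then $\alpha^i_\ell$ is independent of the remaining factor $\alpha^\kappa_\ell\norm{\alpha^\kappa_\ell}_2^2$, and pulling out $\erw{(\alpha^i_\ell)^*} = 0$ gives zero. The symmetric subcase $j = i \neq \kappa$ is handled identically via $\erw{\alpha^\kappa_\ell}=0$.

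Next, in the case $i = \kappa \neq j$, the vector $\alpha^j_\ell$ is independent of $\alpha^i_\ell$, so the expectation factors:
\[
	\erw{\alpha^i_\ell \norm{\alpha^j_\ell}_2^2 (\alpha^i_\ell)^*} = \erw{\norm{\alpha^j_\ell}_2^2}\cdot\erw{\alpha^i_\ell (\alpha^i_\ell)^*} = s_j \cdot \id_{S_i},
\]
using that $\alpha^j_\ell$ has $s_j$ unit-variance entries and hence $\erw{\norm{\alpha^j_\ell}_2^2} = s_j$.

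The main work is the remaining case $i = j = \kappa$, where we must evaluate the fourth-order quantity $\erw{\alpha^i_\ell \norm{\alpha^i_\ell}_2^2 (\alpha^i_\ell)^*}$. I would compute its $(p,q)$-entry as
\[
	\sum_{m \in S_i} \erw{\alpha^i_\ell(p)\,\abs{\alpha^i_\ell(m)}^2\,\overline{\alpha^i_\ell(q)}},
\]
and apply the Gaussian fourth-moment formula (Isserlis/Wick) entrywise. Each summand vanishes unless $p=q$; for $p=q$ the diagonal term $m=p$ contributes the fourth moment of a single entry, while each of the $s_i - 1$ off-diagonal terms $m \neq p$ contributes $\erw{\abs{\alpha^i_\ell(p)}^2}\erw{\abs{\alpha^i_\ell(m)}^2}$, plus the Wick cross-pairing term. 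Summing over $m \in S_i$ produces the claimed multiple $(s_i+2)$ of $\id_{S_i}$. The only real obstacle is organising the subcases in the first part and invoking the right Gaussian fourth-moment identity in the diagonal case; no deep idea is required.
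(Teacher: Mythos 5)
Your proposal is correct and follows essentially the same route as the paper: the identical case split, with the two vanishing branches and the $i=\kappa\neq j$ branch handled by exactly the same independence/factoring argument. The only (cosmetic) difference is in the case $i=j=\kappa$, where you evaluate $\erw{\alpha^i_\ell \norm{\alpha^i_\ell}_2^2 (\alpha^i_\ell)^*}$ entrywise via the Wick/Isserlis fourth-moment formula, while the paper writes $\alpha^i_\ell \sim \rho_i\theta_i$ with $\rho_i$ a $\chi_{s_i}$-variable independent of the uniform spherical direction $\theta_i$ and computes $\erw{\rho_i^4}\erw{\theta_i\theta_i^*}$ -- both are elementary computations of the same moment and yield $(s_i+2)\id_{S_i}$.
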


\begin{proof}
	In the case that $i \neq \kappa$, we have, due to the independence 
	$$\erw{\alpha_\ell^\kappa \norm{\alpha_\ell^j}_2^2 (\alpha^i_\ell)^*} = \erw{\alpha_\ell^\kappa \norm{\alpha_\ell^j}_2^2}\erw{(\alpha^i_\ell)^*} = \erw{\alpha_\ell^\kappa \norm{\alpha_\ell^j}_2^2} \cdot 0 =0.$$

In the case that $i =\kappa$, but $j$ is distinct from $i$, we have, again due to independence
\begin{align*}
	\erw{\alpha_\ell^\kappa \norm{\alpha_\ell^j}_2^2 (\alpha^i_\ell)^*}= \erw{\norm{\alpha_\ell^j}_2^2} \erw{ \alpha_\ell^i(\alpha^i_\ell)^*}= s_j \id_{S_i}.
\end{align*}
For the final case that all three indices are equal, we first note that $\alpha_\ell^i \sim \rho_i \theta_i$, with $\rho_i$, $\theta_i$ independent, $\rho_i \sim \norm{\alpha_\ell^i}$, i.e. is $\chi_{s_i}$-distributed, and $\theta_i$ uniformly distributed over $\sph^{s_i-1}$. Hence
\begin{align*}
	\erw{\alpha_\ell^i \norm{\alpha_\ell^i}_2^2 (\alpha^i_\ell)^*} = \erw{\rho_i^4} \erw{\theta_i \theta_i^*} = s_i(s_i+2) \tfrac{1}{s_i} \id_{S_i}= (s_i+2)\id_{S_i},
\end{align*}
where the second to last equation follows from
\begin{align*}
	\erw{\rho_i^4} = \sum_{\kappa \in [s_i]} \erw{\alpha_\ell^i(\kappa)^4} + \sum_{\kappa \neq \lambda \in [s_i]} \erw{\alpha_\ell^i(\kappa)^2}\erw{\alpha_\ell^i(\lambda)^2} 
= s_i\cdot 3+ s_i(s_i-1)= s_i(s_i+2) \end{align*}
\end{proof}

Now that we have the $\delta$-parameter under control, the $\beta$-parameter is easy to handle.

\begin{proof}[Proof of Lemma \ref{lem:OffDiagonal}]
\eqref{eq:RIP} implies that $\norm{\calP_\calS \calA^*}_{2 \to F} = \sqrt{\norm{\calP_\calS \calA^*\calA \calP_\calS}_{F \to F}} \leq \sqrt{ \tfrac{5}{4}}$, which in turn implies that
\begin{align*}
	\norm{\calP_S \calA^* \calA^i_j}_{2 \to F} \leq \tfrac{\sqrt{5}}{2} \norm{\calA^i_j}_{2\to 2}.
\end{align*}
It is furthermore clear that $\norm{\calA^i_j}_{2 \to 2} = \sqrt{\norm{(\calA_j^i)^*\calA_j^i}_{2 \to 2}}= \sqrt{\norm{\calP_{(i,j)}\calA^*\calA\calP_{(i,j)}}_{F \to F}}$. The latter expression can be dealt with just as the corresponding one in Lemma \ref{eq:RIP} - Theorem \ref{th:SwissArmyKnife} implies for fixed $(i,j)$
\begin{align*}
	\prb{ \norm{\calP_{(i,j)}\calA^*\calA\calP_{(i,j)} - \calP_{(i,j)}}_{F \to F}>  \tfrac{1}{4}} \leq  P_{1,\sigma^2,R,q}(\tfrac{1}{4})
\end{align*}
with $R \leqsim \tfrac{\muplus^2}{q} k_i$ and $\sigma^2 \asymp \tfrac{\mu^2}{q} k_i$. In particular, we have  $\max\left(\sigma^2 + R \log\left( 1+ \tfrac{C_2R^2q}{\sigma^2}\right), R\log\left(1 + \tfrac{C_2R^2q}{\sigma^2}\right)\right) \leqsim \tfrac{1}{16}\log(\tfrac{nk^*}{\epsilon})^{-1}$ under assumption $(d)$, and we way hence conclude that  $P_{1,\sigma^2,R,q}(\tfrac{1}{4}) \leq \frac{\epsilon}{nk^*}$, which together with a union bound over all (less than $nk^*$) pairs $(i,j)$ proves that $\sup_{i,j} \norm{\calA^{i}_j}_{2\to 2} \leq \sqrt{1+ \tfrac{1}{4}}$, and therefore also the theorem.
\end{proof}

\subsection{The Dual Certificate}

Now we prove that the dual certificate defined in \eqref{eq:dualCert} has the properties we need with high probability.

\begin{proof}[Proof of Lemma \ref{lem:dualCert}]
	Let us begin by noting that \eqref{eq:RIP} implies that $\calA_\calS^*\calA_\calS$ is invertible, with $\norm{(\calA_\calS^*\calA_\calS)^{-1}}\leq \tfrac{4}{3}$. Since also $\norm{\calA_\calS}_{F \to 2} \leq \sqrt{\tfrac{5}{4}}$, we have
	\begin{align*}
		\norm{\upsilon} \leq\tfrac{4}{3} \sqrt{\tfrac{5}{4}}\norm{\widehat{\calZ}_0}_F = \tfrac{2\sqrt{5}}{3}  \sqrt{s},
	\end{align*}
	where the last equality is true since $\widehat{\calZ}_0$ is a tuple of matrices with column-sparsities $s_1, \dots s_r$, and each of the non-zero columns are normalized. It is furthermore clear that
	\begin{align*}
		\calY_\calS = \calA_\calS^* \upsilon = \calA_\calS^*\calA_\calS (\calA_\calS^*\calA_\calS)^{-1}
	 \calP_\calS \widehat{\calZ}_0 = \calP_\calS \widehat{\calZ}_0.
	\end{align*}
	Hence, it just remains to estimate the norms of the columns in $\calY$ corresponding to $(i,j) \notin \calS$. Towards this, let us define the matrix tuple
	\begin{align*}
		\widehat{\calY} = (\calA_\calS^*\calA_\calS)^{-1} \widehat{\calZ}_0.
	\end{align*}
	Then, due to the near-isometry property of $\calA_\calS^*\calA_\calS$ and $\norm{\widehat{\calZ}_0}_F =\sqrt{s}$,		$\tfrac{4}{5}\sqrt{s} \leq \norm{\widehat{\calY}}_F \leq \tfrac{4}{3}\sqrt{s}$. Also for any index $(i,j)$
	\begin{align*}
		Y_i(j) = \sum_{\ell \in [q]} \sum_{\kappa \in [r]} \sprod{b^\kappa_\ell, \widehat{Y}_\kappa (a^\kappa_\ell)_{S_\kappa}} b^i_\ell a^i_\ell(j) \sim \sum_{\ell \in [q]} \varphi_\ell,
	\end{align*}
	where we defined $k_i$-dimensional random vectors $\varphi_\ell$ through
	\begin{align*}
		\varphi_\ell = \sum_{\kappa \in [r]} \sprod{b^\kappa_\ell, \widehat{Y}_\kappa \alpha^\kappa_\ell} b^i_\ell \gamma_\ell
	\end{align*}
	with $\gamma_\ell \in \C$ Gaussian, independent of all $\alpha^\kappa_\ell \in \C^{S_\kappa}$. To estimate the $\psi_1$-norm of $\varphi_\ell$, we estimate with the Cauchy-Schwarz inequality and assumption $(c)$
	\begin{align*}
		\norm{\varphi_\ell}_2 \leq \abs{\gamma_\ell} \norm{b^i_\ell}_2 \left( \sum_{\kappa \in [r]} \norm{b^\kappa_\ell}_2\norm{\widehat{Y}_\kappa}_F\norm{ \alpha^\kappa_\ell}_2 \right) \leq \frac{\muplus \sqrt{ k_i}}{\sqrt{q}} \abs{\gamma_\ell}\sqrt{\sum_{\kappa \in [r]} \frac{\muplus^2 k_\kappa}{q} \norm{ \alpha^\kappa_\ell}_2^2} \norm{\widehat{\calY}}_F.
	\end{align*}
	\eqref{eq:OrliczIndep} implies that the  $\psi_1$-norm of this  expression is smaller than $$\tfrac{\muplus^2}{q} \sqrt{k^*} \norm{\widehat{\calY}}_F \normm{\sqrt{\sum_{\kappa \in [r]}\normm{ \alpha^\kappa_\ell}_2^2 k_\kappa}}_{\psi_2},$$
	where we used that univariate Gaussians have $\psi_2$-norm $\leqsim 1$. The term $\norm{\sqrt{\sum_{\kappa \in [r]}\normm{ \alpha^\kappa_\ell}_2^2 k_\kappa}}_{\psi_2}$ is in fact the $\psi_2$-norm of a vector $g$ with independent Gaussian entries, where for every $i=1, \dots, r$, $s_i$ of the entries have the variance $k_i$. \eqref{eq:GaussOrlicz} therefore implies that the expression is smaller than $\sqrt{ \sum_{i \in [r]} s_i k_i}$. We have secured the bound of $$R \leq\tfrac{\muplus^2}{q} \sqrt{k^*} \norm{\widehat{\calY}}_F \sqrt{ \sum_{i \in [r]} s_i k_i}$$ 
	for the application of Corollary \ref{cor:SwissNonRectangular}, and we move on to $\sigma^2$. \newline
	
	First, we have
	\begin{align*}
		\erw{\varphi_\ell^* \varphi_\ell }= \erw{\norm{b^i_\ell}_2^2\gamma_\ell^2 \sum_{\kappa, j \in [r]} (b^\kappa_\ell)^* \widehat{Y}_\kappa \alpha_\ell^\kappa (\alpha_\ell^j)^* \widehat{Y}_j^* b^j_\ell} = \norm{b^i_\ell}_2^2 \sum_{\kappa \in [r]} \sprod{b_\ell^\kappa (b_\ell^\kappa)^*, \widehat{Y}_\kappa \widehat{Y}_\kappa^*} \asymp \frac{\mu^2 k_i}{q}\sum_{\kappa \in [r]} \sprod{b_\ell^\kappa (b_\ell^\kappa)^*, \widehat{Y}_\kappa \widehat{Y}_\kappa^*}
	\end{align*}
	where we used that $\erw{\alpha^j_\ell (\alpha^\kappa_\ell)^*} = \delta_{j\kappa} \id_{S_\kappa}$. Taking the sum over $\ell \in [q]$, we obtain
	\begin{align*}
		\sum_{\ell \in [q]} \erw{\varphi_\ell^* \varphi_\ell } \leqsim \frac{\muplus^2 k^*}{q}\sum_{\kappa \in [r]}\sprod{\sum_{\ell \in [q]} b_\ell^\kappa (b_\ell^\kappa)^*, \widehat{Y}_\kappa \widehat{Y}_\kappa^*} = \frac{\muplus^2 k^*}{q} \norm{\widehat{\calY}}_F^2
	\end{align*}
	where we in the last step used assumption $(b)$. Similarly, one proves $\sum_{\ell \in [q]} \erw{\varphi_\ell^* \varphi_\ell } \geqsim \tfrac{\muminus^2 k_*}{q} \norm{\widehat{\calY}}_F^2$. $\sum_{\ell \in [q]} \erw{\varphi_\ell \varphi_\ell^*}$ is dealt with similarly: one  obtains
	\begin{align*}
		\norm{\sum_{\ell \in [q]} \erw{\varphi_\ell^* \varphi_\ell }}_{2\to 2} \leq  \sum_{\ell \in q} \norm{b^i_\ell(b^i_\ell)^*}_{2\to 2} \sum_{\kappa \in [r]}\sprod{b_\ell^\kappa (b_\ell^\kappa)^*, \widehat{Y}_\kappa \widehat{Y}_\kappa^*} &\leqsim  \frac{\muplus^2 k^*}{q} \sum_{\kappa \in [r]}\sprod{\sum_{\ell \in q} b_\ell^\kappa (b_\ell^\kappa)^*, \widehat{Y}_\kappa \widehat{Y}_\kappa^*} \\ &= \frac{\muplus^2 k^*}{q} \norm{\widehat{\calY}}_F^2.
	\end{align*}
	All in all, we have $ \tfrac{\muminus^2 k_*}{q} \norm{\widehat{\calY}}_F^2 \leqsim \sigma^2 \leqsim \tfrac{\muplus^2 k^*}{q} \norm{\widehat{\calY}}_F^2$.
	
	Towards applying Corollary \ref{cor:SwissNonRectangular}, let us note that the bounds we have proven together with assumption $(d)$ secure that
	\begin{align*}
		\sigma^2 + \tfrac{1}{2}R\log\left(1+ \tfrac{C_2R^2 q}{\sigma^2} \right) &\leqsim \tfrac{\mu_{+}^2k^*}{q} \norm{\widehat{\calY}}_F^2 + \tfrac{\mu_{+}^2}{q} \sqrt{k^*} \norm{\widehat{\calY}}_F \sqrt{\sum_{i\in [r]}s_ik_i} \log\left(1+ \tfrac{\muplus^4 k^*\norm{\widehat{\calY}}_F^2 \sum_{i\in [r]}s_ik_i}{\muminus^2 k_*\norm{\widehat{\calY}_F^2}}\right) \\
		&\leqsim \tfrac{\mu_{+}^2}{q} k^*s \log\left(1+ \tfrac{\muplus^4 k^* \sum_{i\in [r]}s_ik_i}{\muminus^2 k_*}\right) \leqsim \tfrac{1}{4}\log\left(\tfrac{n(k^*+1)}{\epsilon}\right)^{-1}.
	\end{align*}
	We used the inequality between geometric and arithmetic mean $ab \leq \tfrac{1}{2}\left(a^2+b^2\right)$,  $\norm{\widehat{\calY}}_F^2 \asymp s$ and $\sum_{i \in [r] } s_ik_i \leq sk^*$. We also have by the same argument
		\begin{align*}
		R\log\left(1 + \tfrac{C_2R^2q}{\sigma^2}\right) &\leqsim \tfrac{\muplus^2}{q} \sqrt{k^*}\norm{\calY}_F\sqrt{\sum_{i\in [r]}s_ik_i}\log\left(1+ \tfrac{\muplus^4 k^* \sum_{i\in [r]}s_ik_i}{\muminus^2 k_*}\right)  \\
		&\leqsim \tfrac{\muplus^2}{q}sk^*\log\left(1+ \tfrac{\muplus^4 k^* \sum_{i\in [r]}s_ik_i}{\muminus^2 k_*}\right) \leqsim \tfrac{1}{4}\log\left(\tfrac{n(k^*+1)}{\epsilon}\right)^{-1}.
		\end{align*} 	
	
	Applying Corollary \ref{cor:SwissNonRectangular}  together with a union bound  yields
	\begin{align*}
	 \prb{ \max_{(i,j)\notin \calS} \norm{Y_i(j)}_2 \geq \frac{1}{2}}	\leq 2n(k^* +1) P_{k^*+1,\sigma^2,R,q}(\tfrac{1}{2}) \leqsim \epsilon,
	\end{align*}
	and the proof is finished.
\end{proof}

{ \bf Funding:} This Research was funded by the Deutsche
Forschungsgemeinschaft (DFG) Grant KU 1446/18-1.

{\bf Conflicts of Interest:} The author declares that he has no conflict of interest.  

\bibliographystyle{abbrv}
\bibliography{/homes/numerik/flinth/Documents/bibliographyCSandFriendsMASTER}

\end{document}